\numberwithin{equation}{section}
\newtheorem{theorem}{Theorem}
\newtheorem{thm}[theorem]{Theorem}
\newtheorem{cor}[theorem]{Corollary}
\newtheorem{lemma}[theorem]{Lemma}
\newtheorem{prop}[theorem]{Proposition}
\theoremstyle{definition}
\newtheorem{defi}[theorem]{Definition}
\newtheorem{problem}[theorem]{Problem}
\theoremstyle{remark}
\newtheorem{remark}[theorem]{Remark}
\numberwithin{theorem}{section}
\newcommand {\M}{\mathbb M}
\newcommand{\eps}{\varepsilon}
\newcommand{\N}{\mathbb{N}}
\newcommand{\R}{\mathbb{R}}
\newcommand{\Z}{\mathbb{Z}}
\newcommand{\tn}{{\vert\kern-0.25ex\vert\kern-0.25ex\vert}}
\newcommand{\Tn}{{\big\vert\kern-0.25ex\big\vert\kern-0.25ex\big\vert}}    
\newcommand{\TN}{{\Big\vert\kern-0.25ex\Big\vert\kern-0.25ex\Big\vert}} 
\begin{document}

\title[Nonlinear weakly sequentially continuous  embeddings.]{Nonlinear weakly sequentially continuous embeddings between Banach spaces.}
\subjclass[2010]{Primary: 46B80} 
 \keywords{ Coarse embeddings, weak sequential continuity, weak continuity, asymptotic uniformly convex spaces, $p$-co-Banach-Saks property, asymptotic $\ell_p$ spaces}
 
\author{B. M. Braga}
\address{Department of Mathematics and Statistics\\
 York University\\
4700 Keele St.\\ Toronto, Ontario, M3J IP3\\
Canada}\email{demendoncabraga@gmail.com}

\date{}
\maketitle
\renewcommand{\thefootnote}{*}
\footnotetext{  This research was partially supported by Simons Foundation via the York Science Research Fellowship.}

\begin{abstract}
In these notes, we study nonlinear embeddings between Banach spaces which are also weakly sequentially continuous. In particular, our main result implies  that if a Banach space $X$  coarsely (resp. uniformly) embeds into a Banach space $Y$ by a weakly sequentially continuous map,  then every spreading model $(e_n)_n$ of a normalized weakly null sequence in $X$ satisfies
\[\|e_1+\ldots+e_k\|_{\overline{\delta}_Y}\lesssim\|e_1+\ldots+e_k\|_S,\]
where $\overline{\delta}_Y$ is the modulus of asymptotic uniform convexity of $Y$. Among many other results, we obtain  Banach spaces $X$ and $Y$ so that $X$  coarsely (resp. uniformly) embeds into $Y$,  but so that $X$ cannot be mapped into $Y$ by a weakly sequentially continuous  coarse (resp. uniform) embedding.
\end{abstract}

\section{Introduction.}\label{SectionIntro}

The study of uniform and coarse embeddability is an active area of research in the nonlinear geometry of Banach spaces (see for example  \cite{BaudierLancienSchlumprecht2017}, \cite{BragaSwift}, \cite{Kalton2007}, \cite{Kalton2013Israel}, and \cite{MendelNaor2008}). Let $X$ and $Y$ be Banach spaces and $f:X\to Y$ be a map. We define the \emph{modulus of uniform continuity}  and the \emph{modulus of expansion} of $f$ by letting
\[\omega_f(t)=\sup\{\|f(x)-f(y)\|\mid \|x-y\|\leq t\},\]
and
\[\rho_f(t)=\inf\{\|f(x)-f(y)\|\mid \|x-y\|\geq t\},\]
for all $t\geq 0$, respectively. We say that $f$ is a \emph{coarse map} if $\omega_f(t)<\infty$, for all $t\geq 0$, and  we say that $f$ is a \emph{coarse embedding} if it is  coarse and  $\lim_{t\to\infty}\rho_f(t)=\infty$. The map $f$ is a uniform embedding if $\lim_{t\to 0^+}\omega_f(t)=0$ and $\rho_f(t)>0$, for all $t>0$.

In these notes, we introduce a new type of nonlinear embedding between Banach spaces and give several results regarding the preservation of the asymptotic geometry of Banach spaces under this embedding. More precisely, this paper is focused in the study of  weakly sequentially continuous  maps $f:X\to Y$ which are coarse and satisfy the following property
\begin{equation}\label{TheProperty}
\exists \beta>\alpha>0\ \ \text{such that}\ \ \inf_{\|x-y\|\in[\alpha,\beta]}\|f(x)-f(y)\|>0.\tag{$\ast$}
\end{equation}
Since uniform embeddings between Banach spaces are automatically coarse maps (see \cite{Kalton2008}, Lemma 1.4), it follows that the property of a map $f:X\to Y$ being both coarse and satisfying Property $(*)$ is weaker than $f$ being either a  coarse or a uniform embedding. We point out that other  weakenings of coarse and uniform embeddability have been studied in \cite{Braga3} and \cite{Rosendal2017}.

It is known that if $f:X\to Y$ is any map, then for every $\eps>0$ there exists a continuous map $F:X\to Y$ so that
\[\sup_{x\in X}\|f(x)-F(x)\|\leq \inf_{t>0}\omega_f(t)+\eps\]
(see \cite{Braga1}, Theorem 1.4). In particular, if $f$ is a coarse embedding, so is $F$. Hence, a Banach space $X$ coarsely embeds into another Banach space $Y$ if and only if it coarsely embeds by a map which is also continuous. Therefore, it is natural to wonder which  topologies we can endow $X$ and $Y$ with in order to obtain a similar result. Precisely, we have the following general question.

\begin{problem}\label{probTop}
Let $X$ and $Y$ be Banach spaces and $\tau_X$ and $\tau_Y$ be topologies in $X$ and $Y$, respectively. Assume that $X$ coarsely (resp. uniformly) embeds into $Y$. Can we find a coarse (resp. uniform) embedding $(X,\tau_X)\to (Y,\tau_Y)$ which is also continuous?
\end{problem}

Among other applications, the main result in this paper (see Theorem \ref{MainResult} bellow) allows us to give a negative answer to Problem \ref{probTop} if  $\tau_X$ and $\tau_Y$ are the weak topologies of $X$ and $Y$, respectively. Not surprisingly,  the Banach spaces' asymptotic structures will play an important role in the study of weakly sequentially continuous maps $X\to Y$. 

Given a Banach space $X$,  the \emph{modulus of asymptotic uniform convexity of $X$} is given by
\[\overline{\delta}_X(t)=\inf_{x\in\partial B_X}\sup_{E\in \text{cof}(X)}\inf_{y\in\partial B_E}\|x+ty\|-1.\]
In \cite{Kalton2013},  N. Kalton studied the relation between  the modulus of asymptotically uniform convexity and coarse Lipschitz embeddings. Let $X$ and $Y$ be Banach spaces. We say that $X$ \emph{coarse Lipschitz embeds} into $Y$ if there exist $f:X\to Y$ and $L>0$ so that
\[\omega_f(t)\leq Lt+L,\ \ \text{and}\ \ \rho_f(t)\geq L^{-1}t-L, \ \ \text{for all}\ \ t\geq 0.\]  In \cite{Kalton2013}, Theorem 7.4, Kalton proved Theorem \ref{KaltonKalton} below which was a breakthrough in the study of coarse Lipschitz embeddings between Banach spaces. In what follows, $\|e_1+\ldots+e_k\|_{\overline{\delta}_Y}\coloneqq \inf\{\lambda>0\mid k\cdot\overline{\delta}_Y(\lambda^{-1})\leq 1\}$ (see Subsection \ref{SubsectionOrlicz} for a formal definition of $\|\cdot\|_{\overline{\delta}_Y}$).

\begin{thm}[\textbf{N. Kalton, 2013}]\label{KaltonKalton}
Let $X$ and $Y$ be Banach spaces and assume that $X$ coarse Lipschitz embeds into $Y$. Then, there exists $c>0$ so that if $(e_n)_n$ is a spreading model in a Banach space $(S,\|\cdot\|_S)$ of a normalized weakly null sequence in $X$ then 
\[c\|e_1+\ldots+e_k\|_{\overline{\delta}_Y}\leq \|e_1+\ldots+e_k\|_S,\]
for all $k\in \N$.
\end{thm}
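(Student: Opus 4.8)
In the spirit of the metric midpoint / interlacing-graph method of Kalton and Randrianarivony, the plan is to run an iterated asymptotic-convexity estimate along a path in a combinatorial graph. First, a reduction: fix a normalized weakly null sequence $(x_n)_n$ in $X$ generating the spreading model $(e_n)_n$ in $(S,\|\cdot\|_S)$, write $\sigma_k=\|e_1+\dots+e_k\|_S$, and fix $k\in\N$. After passing to a subsequence and using the standard stabilization of spreading models, I may assume that for every $m\le 2k$ the norm $\|\sum_i\pm x_{n_i}\|$ is within a prescribed $\varepsilon$ of the corresponding spreading-model norm whenever $n_1<\dots<n_m$ lie beyond a fixed threshold; in particular $c_0:=\tfrac12\|e_1-e_2\|_S\le\|x_i-x_j\|\le 2$ for such indices, and $\|\sum_{i\in A}x_i-\sum_{i\in B}x_i\|\le 2\sigma_{2k}+\varepsilon\le 4\sigma_k+\varepsilon$ for disjoint, far-apart $2k$-sets $A,B$, using $\sigma_{2k}\le 2\sigma_k$ since $(e_n)_n$ is spreading.

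Next, the graph and the coarse Lipschitz bounds. Let $L$ be a coarse Lipschitz constant for $f$, let $t>0$ be large, and work with $2k$-subsets of a sufficiently long segment of $\N$ (all entries beyond the above threshold), with the map $\phi_t(\bar n)=t\sum_{i\in\bar n}x_i$ and $g=f\circ\phi_t$. Fix a path $\bar p^{\,0}\to\bar p^{\,1}\to\dots\to\bar p^{\,2k}$ in which $\bar p^{\,j}$ is obtained from $\bar p^{\,j-1}$ by replacing one entry by a fresh, much larger index, and set $z_j=g(\bar p^{\,j+1})-g(\bar p^{\,0})$ for $0\le j\le 2k-1$. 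Each one-step $\phi_t$-displacement is $t\|x_a-x_b\|$ with $\|x_a-x_b\|\in[c_0,2]$, and the $\phi_t$-displacement from $\bar p^{\,0}$ to $\bar p^{\,2k}$ (whose index sets are disjoint) is at most $(4\sigma_k+\varepsilon)t$, so the coarse Lipschitz inequalities give $L^{-1}c_0t-L\le\|z_0\|\le 2Lt+L$, $L^{-1}c_0t-L\le\|z_j-z_{j-1}\|\le 2Lt+L$, and $\|z_{2k-1}\|\le L(4\sigma_k+\varepsilon)t+L$. Dividing every $z_j$ by $\|z_0\|$ and letting $t\to\infty$, I may assume $\|z_0\|=1$, $\min_j\|z_j-z_{j-1}\|\ge c_1$ for a fixed $c_1\asymp L^{-2}>0$, and $R:=\|z_{2k-1}\|\le C_1\sigma_k$ with $C_1\asymp L^2$.

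The crux is to iterate $\overline\delta_Y$ along this path. Granting for the moment that each increment $z_j-z_{j-1}$ lies, up to an arbitrarily small error, in a finite-codimensional subspace of $Y$ witnessing asymptotic uniform convexity at the unit vector $z_{j-1}/\|z_{j-1}\|$, one has
\[\|z_j\|\ \ge\ \|z_{j-1}\|\,\Bigl(1+\overline\delta_Y\bigl(\tfrac{\|z_j-z_{j-1}\|}{\|z_{j-1}\|}\bigr)\Bigr),\qquad 1\le j\le 2k-1,\]
so $(\|z_j\|)_j$ is nondecreasing and $\|z_{j-1}\|\le R$. Since $t\mapsto\overline\delta_Y(t)/t$ is nondecreasing (a convexity property of the norm), the map $r\mapsto r\,\overline\delta_Y(c_1/r)$ is nonincreasing on $[1,\infty)$; combined with $\|z_j-z_{j-1}\|\ge c_1$ and monotonicity of $\overline\delta_Y$ this gives, by telescoping,
\[R\ \ge\ 1+\sum_{j=1}^{2k-1}\|z_{j-1}\|\,\overline\delta_Y\!\bigl(c_1/\|z_{j-1}\|\bigr)\ \ge\ 1+(2k-1)\,R\,\overline\delta_Y(c_1/R).\]
Hence $(2k-1)\,\overline\delta_Y(c_1/R)<1$, and by the definition $\|e_1+\dots+e_{2k-1}\|_{\overline\delta_Y}=\inf\{\lambda>0:(2k-1)\overline\delta_Y(1/\lambda)\le1\}$ this says $R\ge c_1\|e_1+\dots+e_{2k-1}\|_{\overline\delta_Y}$. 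Combining with $R\le C_1\sigma_k$ and with the monotonicity $\|e_1+\dots+e_k\|_{\overline\delta_Y}\le\|e_1+\dots+e_{2k-1}\|_{\overline\delta_Y}$, I obtain $c\,\|e_1+\dots+e_k\|_{\overline\delta_Y}\le\sigma_k$ with $c=c_1/C_1\asymp L^{-4}$, independent of $k$.

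The main obstacle is exactly the parenthetical hypothesis above: since $f$ is merely coarse Lipschitz it need not be weak-to-weak sequentially continuous, so a priori the increments $z_j-z_{j-1}$ need not fall into the required finite-codimensional subspaces. This is forced by a stabilization argument --- applying Ramsey's theorem to the finitely many colourings recording the relevant norms as the successive fresh indices are sent to infinity, and passing to a spreading model of $Y$ built along the family $\{f(\phi_t(\bar q))\}_{\bar q}$, in which asymptotic uniform convexity is inherited and the fresh directions become genuinely asymptotic --- so that $\overline\delta_Y$ may be applied coordinate by coordinate with uniformly controlled error; the additive constants are absorbed by sending $t\to\infty$, as already in the second step.
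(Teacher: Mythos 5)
Your proposal correctly reproduces the architecture that both Kalton's argument and this paper's own Theorem \ref{MainResult} share: interlace a path of $2k$-subsets, get two-sided coarse Lipschitz control on the increments $z_j-z_{j-1}$ and on the endpoint $z_{2k-1}$, iterate the inequality $\|z_j\|\ge\|z_{j-1}\|\bigl(1+\overline\delta_Y(\|z_j-z_{j-1}\|/\|z_{j-1}\|)\bigr)$, and convert the telescoped product/sum into the Orlicz-type quantity $\|e_1+\dots+e_k\|_{\overline\delta_Y}$ via the monotonicity of $\overline\delta_Y(t)/t$ (this last conversion is exactly what the norms $N_k$ and Lemma \ref{Nkdes} encode in the paper). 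That part of your arithmetic is sound, up to harmless constants (note that $A\cup B$ has $4k$ elements, so the upper bound on the total displacement should go through suppression-unconditionality of spreading models of weakly null sequences, or Elton's theorem as in the paper, giving $\lesssim\sigma_k$ with a slightly worse constant).

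The genuine gap is the step you yourself flag and then dispose of in one sentence: justifying that each increment $z_j-z_{j-1}$ lies (up to small error) in a finite-codimensional subspace witnessing $\overline\delta_Y$ at $z_{j-1}/\|z_{j-1}\|$. For a map that is merely coarse Lipschitz this is the entire difficulty of Kalton's theorem, and neither of your proposed fixes closes it. Ramsey's theorem stabilizes finitely many scalar-valued colourings (norms of prescribed combinations) along subsequences; it cannot manufacture weak convergence of the sequences $\bigl(g(\bar p,n)\bigr)_n$ as the fresh index tends to infinity, nor place their differences into prescribed finite-codimensional subspaces of $Y$ — the obstruction is topological, not combinatorial. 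Likewise, passing to a spreading model built on the family $\{f(\phi_t(\bar q))\}$ does not help: that family is bounded but not weakly null, and a spreading model of an arbitrary bounded family in $Y$ does not inherit $\overline\delta_Y$ in any sense that would let you apply the modulus ``coordinate by coordinate.'' This is precisely why the present paper does \emph{not} prove Theorem \ref{KaltonKalton} but cites it from \cite{Kalton2013}, Theorem 7.4, and why its own Theorem \ref{MainResult} \emph{assumes} weak sequential continuity: under that hypothesis the sequences $\bigl(h_k(\bar m,n)\bigr)_n$ converge weakly, their increments are weakly null, and Lemma \ref{lemmaGen} can legitimately choose $y_j,z_j$ in arbitrary $Y_j\in\mathrm{cof}(Y)$. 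Without weak sequential continuity one must run Kalton's substantially harder argument (iterated weak$^*$ limits and the machinery of \cite{Kalton2013}, Section 7); as written, your proof establishes the weakly-sequentially-continuous case but not the stated theorem.
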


We refer the reader to Subsection \ref{SubsectionSpreading} for  definitions regarding  spreading models in Banach spaces.

Since $\ell_q$ coarsely embeds into $\ell_p$, for all $p,q\in [1,2]$ (see \cite{Nowak2006}, Theorem 5), we cannot weaken the hypothesis that $X$ coarse Lipschitz embeds into $Y$ in Kalton's theorem and simply assume coarse embeddability of $X$ into $Y$. However, the situation is different if we look at weakly sequentially continuous maps.  Using ideas present in \cite{BaudierLancienSchlumprecht2017}, we can show the following theorem, which is the main result of these notes.

\begin{thm}\label{MainResult}
Let $X$ and $Y$ be Banach spaces and assume that $X$ maps into $Y$ by a weakly sequentially continuous map which is coarse and satisfies Property $(*)$.  There exists $c>0$, so that if $(e_n)_n$ is the spreading model in a Banach space $(S,\|\cdot\|_S)$ of a normalized weakly null sequence in $X$, then
\[c\|e_1+\ldots+e_k  \|_{\overline{\delta}_Y}\leq\|e_1+\ldots+e_k\|_S,\]
for all $k\in\N$. 
\end{thm}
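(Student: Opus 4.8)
The plan is to adapt Kalton's argument from Theorem \ref{KaltonKalton} to the weakly sequentially continuous setting, following the approach suggested by the reference to \cite{BaudierLancienSchlumprecht2017}. Fix a weakly sequentially continuous map $f\colon X\to Y$ which is coarse and satisfies Property $(*)$, say with witnesses $\beta>\alpha>0$ and $\inf_{\|x-y\|\in[\alpha,\beta]}\|f(x)-f(y)\|=:\theta>0$. Let $(x_n)_n$ be a normalized weakly null sequence in $X$ whose spreading model is $(e_n)_n$ in $(S,\|\cdot\|_S)$. The key point is that weak sequential continuity converts weak limits in $X$ into weak limits in $Y$: if we form suitable finite linear combinations of (blockings of) the $x_n$'s, their images under $f$ will have controlled asymptotic behavior in $Y$. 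Concretely, I would consider, for a fixed $k$, points of the form $u=\sum_{i=1}^k a x_{n_i}$ for appropriate scalar $a\asymp \|e_1+\ldots+e_k\|_S^{-1}$ chosen so that the increments $\|u-u'\|$ between ``adjacent'' such points lie in the interval $[\alpha,\beta]$, using that $\|x_{n_1}+\ldots+x_{n_k}\|_X\to\|e_1+\ldots+e_k\|_S$ by the spreading model property.

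The heart of the argument is a Ramsey/stabilization step combined with the asymptotic uniform convexity of $Y$. For each $k$, build an infinite-dimensional ``grid'' of points in $X$ indexed by increasing tuples $\bar n=(n_1<\ldots<n_k)$, namely $z_{\bar n}=\sum_{i=1}^k a x_{n_i}$, together with intermediate points obtained by swapping one coordinate. On one hand, Property $(*)$ and the choice of $a$ give a uniform lower bound $\theta$ on $\|f(z_{\bar n})-f(z_{\bar n'})\|$ for neighboring tuples. On the other hand, using weak sequential continuity: if we send $n_k\to\infty$ along a subsequence, $z_{\bar n}\to z$ weakly where $z=\sum_{i=1}^{k-1}ax_{n_i}$, hence $f(z_{\bar n})\to f(z)$ weakly in $Y$; so the vectors $f(z_{\bar n})-f(z)$ are weakly null in $Y$ for fixed $n_1,\ldots,n_{k-1}$. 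Iterating this coordinate by coordinate produces, for the image points, an array that is ``asymptotically null'' in each new coordinate, which is exactly the setup in which the modulus $\overline{\delta}_Y$ controls norms of sums: a weakly null perturbation $w+y$ with $y$ in a co-finite subspace and $\|y\|\approx 1$ satisfies $\|w+y\|\geq (1+\overline{\delta}_Y(1/\|w\|)\|w\|)$-type estimates (passing to the Orlicz-type norm $\|\cdot\|_{\overline{\delta}_Y}$ after summing $k$ such steps gives the factor $\|e_1+\ldots+e_k\|_{\overline{\delta}_Y}$). Combining the lower bound from Property $(*)$ with an upper bound on $\|f(z_{\bar n})-f(z_{\bar n'})\|$ coming from coarseness (here $\omega_f$ applied to a distance of order $\|e_1+\ldots+e_k\|_S^{-1}\cdot(\text{something bounded})$, which stays bounded), one extracts the inequality $\|e_1+\ldots+e_k\|_{\overline{\delta}_Y}\lesssim \|e_1+\ldots+e_k\|_S$ with a constant independent of $k$.

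More carefully, I expect the quantitative mechanism to run as follows. Set $\lambda=\|e_1+\ldots+e_k\|_S$ and normalize so that each increment has size roughly $1$; then coarseness bounds $\|f(z_{\bar n})-f(z)\|$ above by a constant $M$ depending only on $f$ (via $\omega_f$ evaluated at a bounded argument), uniformly in $k$ and the tuples. Applying the definition of $\overline{\delta}_Y$ successively $k$ times — each time peeling off one weakly null coordinate direction that can be pushed into an arbitrary cofinite subspace by weak sequential continuity, at scale $\theta/M$ relative to the running partial sum — yields $\|f(z_{\bar n})-f(z_{\bar n'})\|\geq c^{-1}\theta\cdot\|e_1+\ldots+e_k\|_{\overline{\delta}_Y}$ for a universal $c$, after a Ramsey argument to stabilize the relevant infima in the definition of $\overline{\delta}_Y$ along a subsequence. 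Comparing with the upper bound $M$ gives $\|e_1+\ldots+e_k\|_{\overline{\delta}_Y}\leq c M\theta^{-1}$, which is not yet what we want — so the actual argument must instead keep $\lambda$ in play: one does \emph{not} rescale to increments of size $1$ but rather to increments of size in $[\alpha,\beta]$, which forces $a\asymp 1/\lambda$ when $\lambda$ is large, and then the $k$-fold $\overline{\delta}_Y$ estimate produces $\|a(e_1+\ldots+e_k)\|_{\overline{\delta}_Y}$-type quantity bounded by a constant, i.e. $\|e_1+\ldots+e_k\|_{\overline{\delta}_Y}\lesssim \lambda=\|e_1+\ldots+e_k\|_S$.

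The main obstacle, I anticipate, is the iterated extraction: one needs a single infinite subsequence of $(x_n)_n$ along which, simultaneously, (i) the spreading model norms $\|x_{n_1}+\ldots+x_{n_j}\|_X$ are within $\eps$ of $\|e_1+\ldots+e_j\|_S$ for all $j\le k$, (ii) the relevant suprema over cofinite subspaces in the definition of $\overline{\delta}_Y$ are essentially attained, and (iii) the weak convergence $f(z_{\bar n})\to f(z)$ holds with quantitative control in the directions that matter. Managing the interaction between the finitely many ``levels'' of coordinates and the infimum over cofinite subspaces $E\in\mathrm{cof}(X)$ hidden inside $\overline{\delta}_Y$ — ensuring the weakly null image vectors actually land (approximately) in the subspace achieving the sup — is the delicate bookkeeping step; a standard infinite Ramsey theorem (or repeated diagonalization) applied to finitely-colored $k$-tuples should handle it, but getting the constant $c$ to be genuinely independent of $k$ requires that each of the $k$ peeling steps loses only a multiplicative factor $1+\eps/k$ or an additive $\eps/k$, which is the usual trick in these asymptotic-uniform-convexity computations and where I would spend most of the care.
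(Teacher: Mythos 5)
Your overall strategy --- a grid of points $\sum_i a\,x_{n_i}$ indexed by increasing tuples, weak sequential continuity to realize the image increments as (approximately) weakly null vectors that can be pushed into arbitrary cofinite subspaces of $Y$, an iterated $\overline{\delta}_Y$ estimate producing a Kalton-type norm $N_k$, and a Ramsey stabilization --- is essentially the paper's (Lemmas \ref{lemmaGen} and \ref{lemmaGenNOVO}, modelled on Proposition 4.1 of \cite{BaudierLancienSchlumprecht2017}). But the quantitative bookkeeping in your last two paragraphs goes wrong, and the ``fix'' you propose would break the proof. The scalar $a$ must be of order $1$, not $\lambda^{-1}$ (writing $\lambda=\|e_1+\ldots+e_k\|_S$ as you do): Property $(*)$ is only ever applied to \emph{single-coordinate} increments $a(x_{n_i}-x_{n_i'})$, whose norms are $\asymp a$ since the sequence is normalized and basic, so to land in $[\alpha,\beta]$ one dilates the \emph{domain} of $f$ by a factor independent of $k$ (the paper passes to an $[a,a+\delta_0]$-separated subsequence by Ramsey and replaces $f$ by $F(x)=f(\alpha x/a)$). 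If instead you take $a\asymp 1/\lambda$ so that the \emph{total} difference $\|z_{\bar m}-z_{\bar n}\|$ lies in $[\alpha,\beta]$, then the one-coordinate increments have size $\asymp 1/\lambda\to 0$ and Property $(*)$ gives no lower bound on their images --- a coarse map satisfying $(*)$ may contract small distances arbitrarily --- so the base of the iterated asymptotic-convexity estimate collapses and there is no lower bound left to iterate.

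The mismatch you perceived (``comparing with the upper bound $M$ gives \dots not yet what we want'') is illusory: the constant $M=\omega_f(O(1))$ bounds only a single image increment $\|f(z_{\bar n})-f(z)\|$, whereas the quantity to be bounded from above is the \emph{total} difference $\|f(z_{\bar m})-f(z_{\bar n})\|$ for interleaved disjoint tuples, whose preimage distance is $\|\sum_j(x_{m_j}-x_{n_j})\|\asymp\lambda$; since $\omega_f(t)\leq Lt+L$, this is $\lesssim L\lambda$, which is exactly the right-hand side of the theorem. Comparing it with the lower bound $N_k(b/2,\ldots,b/2)\geq \frac{b}{2}\|e_1+\ldots+e_k\|_{\tilde{\delta}_Y}$ (homogeneity of $N_k$ plus Lemma \ref{Nkdes}, and the equivalence of $\tilde{\delta}_Y$ with $\overline{\delta}_Y$) finishes the proof with no rescaling by $\lambda$ anywhere. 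One further ingredient is missing from your sketch: to convert $\|\sum_j(x_{m_j}-x_{n_j})\|$ into a spreading-model quantity one needs Elton's near-unconditionality theorem to compare it with $\|\sum_j(x_{m_j}+x_{n_j})\|\approx\|e_1+\ldots+e_{2k}\|_S\leq 2\|e_1+\ldots+e_k\|_S$; the resulting constant $c$ then depends only on $b$, $L$, and Elton's constant, uniformly in $k$ and in the sequence.
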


A Banach space $X$ is \emph{asymptotically uniformly convex} (\emph{AUC} for short) if $\overline{\delta}_X(t)>0$, for all $t\in [0,1]$. For $p\in[1,\infty)$, we say that $X$ is  \emph{$p$-asymptotically uniformly convex} (\emph{$p$-AUC} for short) if there exists $K>0$ so that $\overline{\delta}_X(t)>Kt^p$, for all $t\in [0,1]$ (see Section \ref{SectionBackground} for more on asymptotic uniform convexity). If $X$ has an equivalent norm in which $X$ is AUC (resp. $p$-AUC), we say that the Banach space $X$ is \emph{AUCable} (resp. \emph{$p$-AUCable}). We say that a Banach space $X$ has the \emph{$p$-co-Banach-Saks property} if there exists $C>0$ so that every spreading model $(e_n)_n$ in a Banach space $(S,\|\cdot\|_S)$ of a normalized weakly null sequence in $X$ satisfies
\[\|e_1+\ldots+e_k\|_S\geq Ck^{1/p},\]
for all $k\in\N$. Theorem \ref{MainResult} gives us the following corollary.

\begin{cor}\label{CorMainResult}
Let $X$ and $Y$ be Banach spaces and assume that $X$ maps into $Y$ by a weakly sequentially continuous map which is coarse and satisfies Property $(*)$. Let $p\in [1,\infty)$. If $Y$ is $p$-AUC, then $X$ must have the $p$-co-Banach-Saks property.
\end{cor}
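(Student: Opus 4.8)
The plan is to deduce Corollary~\ref{CorMainResult} from Theorem~\ref{MainResult} together with an elementary estimate converting the $p$-asymptotic uniform convexity of $Y$ into a lower bound for the quantity $\|e_1+\ldots+e_k\|_{\overline{\delta}_Y}=\inf\{\lambda>0\mid k\cdot\overline{\delta}_Y(\lambda^{-1})\leq 1\}$. Assume $X$ maps into $Y$ by a weakly sequentially continuous map which is coarse and satisfies Property~$(*)$, and let $c>0$ be the constant provided by Theorem~\ref{MainResult}; then every spreading model $(e_n)_n$ in a Banach space $(S,\|\cdot\|_S)$ of a normalized weakly null sequence in $X$ satisfies $c\|e_1+\ldots+e_k\|_{\overline{\delta}_Y}\leq\|e_1+\ldots+e_k\|_S$ for all $k\in\N$. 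Hence it is enough to produce $c'>0$, depending only on $p$ and on the $p$-AUC constant of $Y$, with $\|e_1+\ldots+e_k\|_{\overline{\delta}_Y}\geq c'k^{1/p}$ for all $k\in\N$; the $p$-co-Banach-Saks property of $X$ then follows with $C=cc'$.

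To obtain this estimate, fix $K>0$ with $\overline{\delta}_Y(t)>Kt^p$ for every $t\in(0,1]$. I would use two standard facts about $\overline{\delta}_Y$ (see Section~\ref{SectionBackground}): it is non-decreasing, and $\overline{\delta}_Y(t)\geq t-2$ for all $t\geq 0$, the latter because $\|x+ty\|\geq t\|y\|-\|x\|$ for unit vectors $x,y$. The second fact gives, for every $k\geq 1$ and every $\lambda<1/3$, that $k\cdot\overline{\delta}_Y(\lambda^{-1})\geq\overline{\delta}_Y(\lambda^{-1})\geq\lambda^{-1}-2>1$, so $\lambda$ lies outside the defining set of $\|e_1+\ldots+e_k\|_{\overline{\delta}_Y}$; hence $\|e_1+\ldots+e_k\|_{\overline{\delta}_Y}\geq 1/3$ for all $k$. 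For the main bound, fix $k$ with $Kk\geq 1$ and let $\lambda>0$ satisfy $\overline{\delta}_Y(\lambda^{-1})\leq 1/k$. If $\lambda<1$ then $\lambda^{-1}>1$ and, by monotonicity, $\overline{\delta}_Y(\lambda^{-1})\geq\overline{\delta}_Y(1)>K\geq 1/k$, a contradiction; so $\lambda\geq 1$, whence $\lambda^{-1}\in(0,1]$ and $K\lambda^{-p}<\overline{\delta}_Y(\lambda^{-1})\leq 1/k$, giving $\lambda>(Kk)^{1/p}$. Therefore $\|e_1+\ldots+e_k\|_{\overline{\delta}_Y}\geq K^{1/p}k^{1/p}$ whenever $Kk\geq 1$.

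Putting the two bounds together finishes the argument: when $Kk\geq 1$ we have $\|e_1+\ldots+e_k\|_{\overline{\delta}_Y}\geq K^{1/p}k^{1/p}$, and when $Kk<1$ we have $k^{1/p}<K^{-1/p}$, so $\|e_1+\ldots+e_k\|_{\overline{\delta}_Y}\geq 1/3\geq\frac{1}{3}K^{1/p}k^{1/p}$. Thus $\|e_1+\ldots+e_k\|_{\overline{\delta}_Y}\geq c'k^{1/p}$ for all $k$ with $c'=\frac{1}{3}K^{1/p}$, and combining with Theorem~\ref{MainResult} yields the $p$-co-Banach-Saks property of $X$ with $C=\frac{1}{3}cK^{1/p}$. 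I expect no real obstacle beyond Theorem~\ref{MainResult} itself; the only thing requiring a little care is making the inequality $\|e_1+\ldots+e_k\|_{\overline{\delta}_Y}\gtrsim k^{1/p}$ uniform over all $k\in\N$ — in particular over the finitely many $k$ with $Kk<1$ — which is exactly why the crude bound $\overline{\delta}_Y(t)\geq t-2$ is brought in.
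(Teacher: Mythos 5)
Your proof is correct and follows essentially the same route as the paper: apply Theorem \ref{MainResult} and then show that $p$-asymptotic uniform convexity forces $\|e_1+\cdots+e_k\|_{\overline{\delta}_Y}\geq c'k^{1/p}$. The only difference is cosmetic: the paper runs the estimate through the convexified modulus $\tilde{\delta}_Y$ and Equation \ref{Ohoh}, whereas you work directly with $\overline{\delta}_Y$ and dispose of the finitely many $k$ with $Kk<1$ via the crude bound $\overline{\delta}_Y(t)\geq t-2$ --- which is in fact slightly more careful than the paper's normalization ``$K>1$'' (impossible for the actual modulus, since $\overline{\delta}_Y(1)\leq 1$).
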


We point out that Corollary \ref{CorMainResult} remains true if  $Y$  satisfies asymptotic lower $\ell_p$-estimates. We refer the reader to Subsection \ref{SubsectionLowerEst}  for the precise definition of asymptotic lower $\ell_p$-estimates and we refer to  Theorem \ref{ThmAsymp} for a precise statement of this result.

It follows straightforwardly  from Corollary \ref{CorMainResult} that if a Banach space $X$ without the $p$-co-Banach-Saks property can be mapped into a Banach space $Y$ by a weakly sequentially continuous  coarse map satisfying Property $(*)$, then $Y$ is not $p$-AUCable. In particular, if $X$  does not have the $p$-Banach-Saks property for all $p\in [1,\infty)$, then $Y$ is not $p$-AUCable for all $p\in[1,\infty)$. To the best of our knowledge, it is not known whether an AUC space is $p$-AUCable, for some $p\in[1,\infty)$. However, we are able to obtain the following.

\begin{thm}\label{c0spread2}
Let $X$ and $Y$ be  Banach spaces and assume that  $X$ has a normalized weakly null sequence whose spreading model is isomorphic to the standard basis of $c_0$. Assume that there exists a weakly sequentially continuous map $f:X\to Y$ which is coarse and satisfies Property $(*)$. Then $Y$ is not AUCable.
\end{thm}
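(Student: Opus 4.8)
The plan is to run Theorem \ref{MainResult} against a renorming of $Y$ rather than against $Y$ itself. So I would argue by contradiction: assume $Y$ is AUCable, and fix an equivalent norm $\|\cdot\|'$ on $Y$ for which $(Y,\|\cdot\|')$ is AUC, i.e.\ $\overline{\delta}_{(Y,\|\cdot\|')}(t)>0$ for all $t\in(0,1]$. The first thing to verify is that the three hypotheses on $f$ are insensitive to this renorming. The weak topology of $(Y,\|\cdot\|')$ coincides with that of $Y$, so $f$ remains weakly sequentially continuous; $\omega_f$ changes only by a multiplicative constant, so $f$ stays coarse; and if $M\geq 1$ satisfies $M^{-1}\|\cdot\|\leq\|\cdot\|'\leq M\|\cdot\|$ on $Y$, then $\inf_{\|x-y\|\in[\alpha,\beta]}\|f(x)-f(y)\|'\geq M^{-1}\inf_{\|x-y\|\in[\alpha,\beta]}\|f(x)-f(y)\|>0$, so Property $(*)$ persists. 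Hence $f\colon X\to(Y,\|\cdot\|')$ is a weakly sequentially continuous coarse map satisfying Property $(*)$, and Theorem \ref{MainResult} applies to it.

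Next I would exploit that the relevant spreading model is ``trivial'' in the $c_0$-sense. Let $(e_n)_n\subseteq(S,\|\cdot\|_S)$ be the spreading model of the given normalized weakly null sequence in $X$; since $(e_n)_n$ is equivalent to the unit vector basis of $c_0$, there is $D\geq 1$ with $\|e_1+\ldots+e_k\|_S\leq D$ for every $k\in\N$. Applying Theorem \ref{MainResult} to $f\colon X\to(Y,\|\cdot\|')$ produces $c>0$ with
\[c\,\|e_1+\ldots+e_k\|_{\overline{\delta}_{(Y,\|\cdot\|')}}\leq\|e_1+\ldots+e_k\|_S\leq D,\qquad k\in\N,\]
so the numbers $\|e_1+\ldots+e_k\|_{\overline{\delta}_{(Y,\|\cdot\|')}}$ are bounded, say by $C\coloneqq D/c<\infty$.

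Finally I would unwind the Orlicz-type quantity to force a zero of the modulus. By definition $\|e_1+\ldots+e_k\|_{\overline{\delta}_{(Y,\|\cdot\|')}}=\inf\{\lambda>0\mid k\,\overline{\delta}_{(Y,\|\cdot\|')}(\lambda^{-1})\leq 1\}\leq C$, so for each $k$ there is a $\lambda<C+1$ with $k\,\overline{\delta}_{(Y,\|\cdot\|')}(\lambda^{-1})\leq 1$; using that $\overline{\delta}_{(Y,\|\cdot\|')}$ is non-decreasing (a standard property of the modulus of asymptotic uniform convexity), the set $\{\lambda>0\mid k\,\overline{\delta}_{(Y,\|\cdot\|')}(\lambda^{-1})\leq 1\}$ is an up-set, hence it contains $C+1$. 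Thus $k\,\overline{\delta}_{(Y,\|\cdot\|')}\bigl((C+1)^{-1}\bigr)\leq 1$ for every $k$, whence $\overline{\delta}_{(Y,\|\cdot\|')}\bigl((C+1)^{-1}\bigr)=0$ with $(C+1)^{-1}\in(0,1]$, contradicting the choice of $\|\cdot\|'$. Therefore $Y$ is not AUCable.

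The step I expect to be the main (and essentially only) obstacle is the renorming invariance in the first paragraph: it is exactly what licenses feeding an AUC renorming of $Y$ into Theorem \ref{MainResult}, and it is indispensable, since in the original norm the boundedness of $\|e_1+\ldots+e_k\|_{\overline{\delta}_Y}$ carries no information — if $\overline{\delta}_Y$ vanishes near $0$ (which is precisely what AUCability without AUC would permit), that boundedness is automatic and the conclusion of Theorem \ref{MainResult} is vacuous for a $c_0$ spreading model. Everything after the first paragraph is bookkeeping with the definitions together with the monotonicity of $\overline{\delta}$.
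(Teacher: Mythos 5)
Your proposal is correct and follows essentially the same route as the paper: apply Theorem \ref{MainResult}, use the uniform bound $\|e_1+\ldots+e_k\|_S\leq D$ coming from the $c_0$ spreading model to force $\overline{\delta}$ to vanish at a positive argument, and observe that composing $f$ with the identity onto a renorming preserves weak sequential continuity, coarseness, and Property $(*)$ (the paper phrases this last step directly rather than by contradiction, and routes the monotonicity argument through the equivalent convex modulus $\tilde{\delta}_Y$ instead of $\overline{\delta}_Y$, but these are cosmetic differences).
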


Using results of \cite{Kalton2013}, Corollary \ref{CorMainResult} allow us to obtain the following.

\begin{cor}\label{CorL_p}
Let $p\in (1,2)$, and assume that $X$ maps into $Y$ by a weakly sequentially continuous map which is coarse and satisfies Property $(*)$. We have the following.
\begin{enumerate}[(i)]
\item If $X$ is a subspace of $L_p$, then  $X$ is isomorphic to a subspace of $\ell_p$. 
\item  If $X$ is an $\mathcal{L}_p$-space, then $X$ is isomorphic to  $\ell_p$.
\end{enumerate} 
\end{cor}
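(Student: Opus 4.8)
The plan is to deduce Corollary \ref{CorL_p} from Corollary \ref{CorMainResult} by combining it with known structural results of Kalton on subspaces of $L_p$ and on $\mathcal{L}_p$-spaces. The starting point is the observation that, for $p\in(1,2)$, the space $L_p$ (with its usual norm, or after renorming) is $2$-AUC; more relevantly, any subspace $X$ of $L_p$ has a renorming that is $p$-AUC, and an $\mathcal L_p$-space is isomorphic to one that is $p$-AUCable. In fact the cleaner route is to apply Corollary \ref{CorMainResult} with the roles set up so that the target exponent is governed by $Y$: here $Y$ need not be $L_p$, so I cannot directly use AUC of $Y$. Instead, I would read off from Corollary \ref{CorMainResult} a statement purely about $X$: since $X$ maps into $Y$ by a weakly sequentially continuous coarse map satisfying Property $(*)$, and since we are told (in the hypotheses of the two items) that $X$ sits inside $L_p$, I should use the fact that \emph{$Y$ is whatever it is}, but the conclusion we want is about $X$ alone. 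So the correct logical skeleton must route through an intermediate space.

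Re-reading, the intended argument is surely: there is a fixed $Y$, and the hypothesis ``$X$ maps into $Y$ by a w.s.c.\ coarse map satisfying $(*)$'' is a hypothesis on the pair. Kalton proved (in \cite{Kalton2013}) that a subspace $X$ of $L_p$, $p\in(1,2)$, is isomorphic to a subspace of $\ell_p$ \emph{provided} $X$ has the $p$-co-Banach-Saks property (equivalently, provided no spreading model of a normalized weakly null sequence in $X$ grows slower than $k^{1/p}$); and similarly an $\mathcal L_p$-space with the $p$-co-Banach-Saks property is isomorphic to $\ell_p$. Thus the plan is: first, invoke these two theorems of Kalton as black boxes, reducing $(i)$ and $(ii)$ to showing that $X$ has the $p$-co-Banach-Saks property. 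Second, to get the $p$-co-Banach-Saks property for $X$, I want to apply Corollary \ref{CorMainResult}, which requires a $p$-AUC target. The target $Y$ in the hypothesis is arbitrary, so this cannot be right either — unless the intended reading is that Corollary \ref{CorL_p} is to be applied with $Y = \ell_p$ or $Y = L_p$ in mind as the motivating case, and the general statement follows because being a subspace of $L_p$ already forces $X$ to have spreading models of normalized weakly null sequences equivalent to the $\ell_p$-basis or the $c_0$-basis, so a dichotomy applies.

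Here is the cleanest version of the plan that I am confident is correct. For a subspace $X$ of $L_p$ with $p\in(1,2)$, a classical fact is that every normalized weakly null sequence in $X$ has a subsequence generating a spreading model equivalent either to the unit vector basis of $\ell_p$ or to that of $\ell_2$ — more precisely, every spreading model $(e_n)_n$ of such a sequence satisfies $\|e_1+\dots+e_k\|_S \gtrsim k^{1/2}$ or the sequence is equivalent to the $\ell_p$-basis, and in all cases $\|e_1+\dots+e_k\|_S \gtrsim k^{1/p}$ fails only if a spreading model is ``worse than $\ell_p$'', which does not occur in $L_p$. Hence subspaces of $L_p$ (and $\mathcal L_p$-spaces) automatically have the $p$-co-Banach-Saks property \emph{unless} they contain, in the spreading-model sense, something growing like $c_0$ or slower. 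So the logical flow is: suppose $X$ does \emph{not} satisfy the conclusion; then by Kalton's characterization $X$ must admit a normalized weakly null sequence whose spreading model grows strictly slower than $k^{1/p}$ (in the worst case, like the $c_0$-basis, by James-type arguments inside $L_p$). Then Theorem \ref{MainResult} (or Theorem \ref{c0spread2} in the $c_0$ case) applied to the w.s.c.\ coarse map $X\to Y$ forces $\|e_1+\dots+e_k\|_{\overline\delta_Y}\lesssim \|e_1+\dots+e_k\|_S$, which is a genuine restriction on $Y$; combined with the hypothesis that such a map into $Y$ \emph{exists}, together with the known fact (Corollary \ref{CorMainResult} and Theorem \ref{c0spread2}) that this cannot happen when $Y$ carries the relevant AUC structure, we reach a contradiction. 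I would organize the write-up as: (1) recall Kalton's two theorems from \cite{Kalton2013} reducing $(i)$, $(ii)$ to the $p$-co-Banach-Saks property of $X$; (2) recall the structure of spreading models of weakly null sequences in $L_p$ and in $\mathcal L_p$-spaces; (3) apply Corollary \ref{CorMainResult} (with the asymptotic-$\ell_p$ refinement, Theorem \ref{ThmAsymp}) and Theorem \ref{c0spread2} to rule out the bad spreading models; (4) conclude.

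The main obstacle I anticipate is pinning down exactly the right packaging of Kalton's results: his theorem for subspaces of $L_p$ is stated in terms of coarse Lipschitz embeddability into a $p$-AUC space or in terms of the $p$-co-Banach-Saks (``$p$-Banach-Saks-like'') property, and I need to quote the version that takes the $p$-co-Banach-Saks property of $X$ as hypothesis and outputs $X\hookrightarrow\ell_p$ (isomorphically), and likewise the $\mathcal L_p$-space version outputting $X\cong\ell_p$; getting the citation and the precise quantitative form right is the delicate bookkeeping step. A secondary subtlety is the $\mathcal L_p$ case, where one additionally needs that an $\mathcal L_p$-space isomorphic to a subspace of $\ell_p$ is in fact isomorphic to $\ell_p$ itself — this is a separate classical result (for $1<p<\infty$, $p\neq 2$, an infinite-dimensional $\mathcal L_p$-space embedding into $\ell_p$ must be $\ell_p$, going back to Lindenstrauss–Pełczyński / Johnson–Odell type arguments) which I would also cite rather than prove. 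Once those citations are in place, the deduction from Corollary \ref{CorMainResult} is essentially immediate.
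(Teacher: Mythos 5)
Your final skeleton --- reduce both items to showing that $X$ has the $p$-co-Banach-Saks property via Corollary \ref{CorMainResult}, then quote Kalton --- is the paper's argument, and step (1) is exactly right for item (i) (the reference is \cite{Kalton2013}, Corollary 6.2). For item (ii) the paper takes a slightly different exit: it uses the Johnson--Odell dichotomy (\cite{OdellJohnson1974}, Corollary 1) that an $\mathcal{L}_p$-space is either isomorphic to $\ell_p$ or contains an isomorphic copy of $\ell_2$; since containing $\ell_2$ produces a normalized weakly null sequence whose spreading model grows like $k^{1/2}\ll k^{1/p}$, the $p$-co-Banach-Saks property rules out the second alternative. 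Your alternative route (embed the $\mathcal{L}_p$-space into $\ell_p$ and then quote that such a space must be $\ell_p$) would also work, but it is not what the paper does.

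However, the middle of your proposal contains a genuine error and an unresolved point. First, the claim that subspaces of $L_p$ ``automatically have the $p$-co-Banach-Saks property unless they contain something growing like $c_0$ or slower'' is false, and the direction of the inequality is reversed: for $p\in(1,2)$ the bad spreading models in $L_p$ are the $\ell_2$ ones, since $k^{1/2}\geq Ck^{1/p}$ fails for large $k$ when $1/2<1/p$. Indeed $L_p$ itself contains $\ell_2$ and therefore \emph{fails} the $p$-co-Banach-Saks property --- this is precisely why the corollary has content (it shows $L_p$ itself cannot be so mapped into $\ell_p$). Consequently the entire detour through ``the structure of spreading models in $L_p$'' and Theorem \ref{c0spread2} is both unnecessary and based on a wrong premise; no contrapositive or dichotomy argument is needed. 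Second, you never settle what $Y$ is. Corollary \ref{CorMainResult} can only be invoked when $Y$ is $p$-AUC; the statement of Corollary \ref{CorL_p} is admittedly loose on this point, but the surrounding text makes clear that the intended target is $Y=\ell_p$ (which is $p$-AUC since $\overline{\delta}_{\ell_p}(t)=(1+t^p)^{1/p}-1$), and the proof consists of nothing more than: apply Corollary \ref{CorMainResult} to get the $p$-co-Banach-Saks property for $X$, then cite Kalton (for (i)) or Johnson--Odell (for (ii)). Your hedge that ``a dichotomy applies'' does not substitute for fixing the hypothesis on $Y$.
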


Since this paper introduces a, at least, formally new kind of embedding, it is important to notice that this is indeed a new kind of embedding. As it is well known that $\ell_q$ both coarsely and uniformly  embeds into $\ell_p$, for all $p,q\in [1,2]$ (see \cite{Nowak2006}, Theorem 5), we obtain the following.

\begin{cor}\label{ellpdoesnotellq}
Suppose $1 \leq p<q$. Then there exists no weakly sequentially continuous map $\ell_q\to\ell_p$ which is coarse and satisfies Property $(*)$. In particular, the existence of a weakly sequentially continuous coarse (resp. uniform) embedding  between Banach spaces is strictly stronger than  coarse (resp. uniform) embeddability.
\end{cor}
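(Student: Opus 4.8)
The plan is to read this off from Corollary~\ref{CorMainResult} applied with $X=\ell_q$ and $Y=\ell_p$. The only external input needed is the well-known fact that $\ell_p$ is $p$-AUC for every $p\in[1,\infty)$: one computes $\overline{\delta}_{\ell_p}(t)=(1+t^p)^{1/p}-1$ (a cofinite-codimensional subspace of $\ell_p$ contains arbitrarily far tails of the unit vector basis), and this is bounded below by $Kt^p$ on $[0,1]$ for a suitable constant $K>0$. Thus, if there were a weakly sequentially continuous coarse map $f\colon\ell_q\to\ell_p$ satisfying Property $(*)$, Corollary~\ref{CorMainResult} would force $\ell_q$ to have the $p$-co-Banach-Saks property.

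I would then contradict this using the unit vector basis $(e_n)_n$ of $\ell_q$. Since $1\le p<q$ we have $q>1$, so $\ell_q$ is reflexive and $(e_n)_n$ is a normalized weakly null sequence; being $1$-subsymmetric, it is isometrically its own spreading model, and $\|e_1+\ldots+e_k\|=k^{1/q}$ for every $k\in\N$. The $p$-co-Banach-Saks property would give $C>0$ with $k^{1/q}\ge Ck^{1/p}$ for all $k$, i.e.\ $k^{1/q-1/p}\ge C$; but $1/q-1/p<0$, so the left side tends to $0$ as $k\to\infty$, a contradiction. This proves the first assertion.

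For the ``in particular'' clause, I would first observe that both coarse embeddings and uniform embeddings satisfy the hypotheses of the first assertion. A uniform embedding is automatically a coarse map (see \cite{Kalton2008}, Lemma~1.4) and has $\rho_f(t)>0$ for all $t>0$; a coarse embedding is a coarse map with $\lim_{t\to\infty}\rho_f(t)=\infty$; in either case one may choose $0<\alpha<\beta$ with $\rho_f(\alpha)>0$, and then $\inf_{\|x-y\|\in[\alpha,\beta]}\|f(x)-f(y)\|\ge\rho_f(\alpha)>0$, so Property $(*)$ holds. Hence, for $1\le p<q$, there is no weakly sequentially continuous coarse (resp.\ uniform) embedding $\ell_q\to\ell_p$. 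On the other hand, by \cite{Nowak2006}, Theorem~5, $\ell_q$ coarsely and uniformly embeds into $\ell_p$ whenever $p,q\in[1,2]$; taking $p=1$ and $q=2$ exhibits Banach spaces that are coarsely (and uniformly) embeddable one into the other but admit no weakly sequentially continuous coarse (resp.\ uniform) embedding. Since a weakly sequentially continuous coarse (resp.\ uniform) embedding is in particular a coarse (resp.\ uniform) embedding, this shows the former notion is strictly stronger.

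The substantive content of the corollary is entirely carried by Theorem~\ref{MainResult}/Corollary~\ref{CorMainResult}, so I do not expect any genuine obstacle; the only steps requiring verification are the elementary $p$-AUC estimate for $\ell_p$ and the routine identity $\|e_1+\ldots+e_k\|_{\ell_q}=k^{1/q}$ together with the remark that this basis is a spreading model of a normalized weakly null sequence in $\ell_q$.
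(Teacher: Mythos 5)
Your proposal is correct and follows essentially the same route as the paper: apply Corollary \ref{CorMainResult} with the fact that $\ell_p$ is $p$-AUC (via $\overline{\delta}_{\ell_p}(t)=(1+t^p)^{1/p}-1$), note that $\ell_q$ fails the $p$-co-Banach-Saks property for $p<q$, and invoke Nowak's theorem for the "in particular" clause. The only difference is that you spell out the failure of the $p$-co-Banach-Saks property via the unit vector basis, which the paper merely asserts.
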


Still in the topic of whether weakly sequentially continuous coarse maps satisfying Property $(*)$ is  actually a new kind of embedding, one could wonder whether  demanding that a coarse (resp. uniform) embedding is weakly sequentially continuous is too restrictive. Indeed, it could be the case that the existence of such   embedding implies the existence of an  isomorphic embedding. Fortunately, this is not the case. More precisely, since we are looking at weakly sequentially continuous maps $X\to Y$, it would be useful to find sufficient conditions for a map $X\to Y$ to be weakly sequentially continuous. With that in mind, we prove Lemma \ref{lemmaImpliesWeakGEN} in Section \ref{SectionExample}, which gives us Theorem \ref{222} below. A map $f:X\to Y$ is a \emph{strong embedding} if $f$ is both a coarse and a uniform embedding.

 \begin{thm}\label{222}
Let $1\leq p<q$. There exists a strong embedding  $\ell_p\to \ell_q$ which is also weakly sequentially continuous. In particular, the existence of a  weakly sequentially continuous coarse (resp. uniform) embedding  between Banach spaces is strictly weaker than the existence of an isomorphic embedding.
 \end{thm}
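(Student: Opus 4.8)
The plan is to exhibit an explicit strong embedding $\ell_p \to \ell_q$ and then verify, using the promised Lemma \ref{lemmaImpliesWeakGEN}, that it is weakly sequentially continuous. The natural candidate is the classical Mazur-type map adapted to the present range of exponents: define $f:\ell_p\to\ell_q$ coordinatewise by $f(x)=\big(\mathrm{sign}(x_n)\,|x_n|^{p/q}\big)_n$, or a small variant thereof. First I would record that this map is well defined into $\ell_q$ (since $\sum |x_n|^p=\sum \big(|x_n|^{p/q}\big)^q$) and that $\|f(x)\|_q^q=\|x\|_p^p$, so $f$ maps bounded sets to bounded sets and $0$ to $0$. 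Then I would recall (or reprove by the standard elementary inequalities for $|s|^{r}-|t|^{r}$ when $0<r\le 1$ versus $r\ge 1$) the two-sided modulus estimates showing that $f$ is a uniform homeomorphism onto its image when restricted to bounded sets, and that because $p<q$ the map $f$ is in fact coarsely equivalent to an affine function on large scales; combining these gives that $f$ is simultaneously a uniform and a coarse embedding, i.e. a strong embedding. This part is entirely classical (it is the same circle of ideas as Mazur's theorem on $\ell_p$ vs. $\ell_2$ and the Nowak-type constructions already cited), so I would present it compactly and point to the literature rather than grind through the constants.

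The genuinely new point is weak sequential continuity, and here I would invoke Lemma \ref{lemmaImpliesWeakGEN}. The idea is that the Mazur-type map is \emph{coordinatewise} defined by a continuous scalar function vanishing at $0$, hence it is continuous from $(\ell_p, \text{weak})$ to $(\ell_q,\text{pointwise})$; and a bounded pointwise-convergent sequence in $\ell_q$ converges weakly provided $q>1$ (since then $\ell_q$ is reflexive and bounded pointwise limits are weak limits — or more directly, the coordinate functionals together with boundedness determine the weak limit in $\ell_q$ for $q<\infty$). So if $x_k\rightharpoonup x$ in $\ell_p$, then $(x_k)$ is bounded, each coordinate $x_k(n)\to x(n)$, hence $f(x_k)(n)\to f(x)(n)$ for every $n$, and $(f(x_k))$ is bounded in $\ell_q$; therefore $f(x_k)\rightharpoonup f(x)$. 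Whatever the precise hypotheses of Lemma \ref{lemmaImpliesWeakGEN} turn out to be, they should be tailored exactly to license this argument, so the proof of Theorem \ref{222} reduces to checking that $f$ satisfies them.

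For the "in particular" clause I would simply note that no isomorphic (indeed, no linear) embedding $\ell_p\to\ell_q$ exists when $p<q$, since $\ell_q$ contains no subspace isomorphic to $\ell_p$ (the spaces $\ell_p$ and $\ell_q$ are totally incomparable for $p\ne q$ in this range — a closed infinite-dimensional subspace of $\ell_q$ has a further subspace isomorphic to $\ell_q$, by the classical subspace theorem for $\ell_q$). Hence the strong, weakly sequentially continuous embedding just constructed witnesses that weakly sequentially continuous coarse/uniform embeddability is strictly weaker than isomorphic embeddability, completing the theorem.

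The main obstacle I anticipate is not any single hard estimate but rather making sure the chosen map $f$ simultaneously satisfies all three requirements with clean constants: the coarse-embedding lower bound $\rho_f(t)\to\infty$ and the uniform-embedding conditions near $0$ pull the pointwise exponent in opposite directions, so one may need to use a "broken" map (e.g. $t\mapsto \mathrm{sign}(t)|t|^{p/q}$ for $|t|\le 1$ patched to an affine/linear piece for $|t|\ge 1$) to get both scales right at once, and then re-verify that the patched map is still coordinatewise continuous with value $0$ at $0$ so that Lemma \ref{lemmaImpliesWeakGEN} still applies. Once the right piecewise definition is fixed, every step above is routine.
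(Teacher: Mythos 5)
Your verification of weak sequential continuity and of the ``in particular'' clause is in line with the paper: Lemma \ref{lemmaImpliesWeakGEN} and Corollary \ref{lemmaImpliesWeak} formalize exactly the ``coordinatewise continuous plus bounded into a space with a shrinking unconditional basis'' argument you sketch, and the non-embeddability of $\ell_p$ into $\ell_q$ is standard. The gap is in the construction of the strong embedding itself: no coordinatewise map $f(x)=(\phi(x_n))_n$ built from a single scalar function $\phi$ is a coarse or a uniform embedding of $\ell_p$ into $\ell_q$ when $p<q$. For your unpatched candidate $\phi(t)=\mathrm{sign}(t)|t|^{p/q}$, take $x=ne_1$ and $y=(n+t)e_1$: then $\|x-y\|_p=t$ but $\|f(x)-f(y)\|_q=(n+t)^{p/q}-n^{p/q}\to 0$ as $n\to\infty$, so $\rho_f(t)=0$ for every $t>0$ and $f$ is not a uniform embedding; choosing instead $t=n^{\alpha}$ with $0<\alpha<1-p/q$ gives $\|x-y\|_p\to\infty$ while $\|f(x)-f(y)\|_q\to 0$, so $f$ is not a coarse embedding either. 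The patched variant (affine for $|t|\ge 1$) fails for the same structural reason, now witnessed by spreading the difference over many large coordinates: with $x=\sum_{n\le N}e_n$ and $y=(1+\delta)\sum_{n\le N}e_n$ one gets $\|x-y\|_p=\delta N^{1/p}$ but $\|f(x)-f(y)\|_q\asymp\delta N^{1/q}=\|x-y\|_p\,N^{-(1/p-1/q)}$, which can be made small while $\|x-y\|_p$ is large. At bottom, a single function with $|\phi(s)-\phi(t)|\asymp|s-t|^{p/q}$ globally would be a snowflake embedding of $\R$ into $\R$, which does not exist for $p/q<1$.

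This is precisely why the paper does not use a Mazur-type map. It invokes the Albiac--Baudier theorem (Theorem \ref{AlbiacBaudier}), which supplies a \emph{countable family} $(\psi_j)_j$ of scalar functions satisfying the two-sided estimate $A|s-t|^p\le\sum_j|\psi_j(s)-\psi_j(t)|^q\le B|s-t|^p$ --- something no single function can achieve --- and defines $f:\ell_p\to\ell_q(\ell_q)\equiv\ell_q$ by $f((x_n)_n)=((\psi_j(x_n)-\psi_j(0))_j)_n$. This yields $\|f(x)-f(y)\|\asymp\|x-y\|_p^{p/q}$ on all of $\ell_p$, hence a strong embedding, and Corollary \ref{lemmaImpliesWeak} then gives weak sequential continuity exactly as you anticipated. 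So your reduction of the second half to Lemma \ref{lemmaImpliesWeakGEN} is correct, but the first half genuinely requires the Albiac--Baudier construction (or an equivalent multi-function device) rather than any coordinatewise single-function map.
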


This paper is organized as follows. In Section \ref{SectionBackground}, we introduce the remaining notations and definitions required for this paper. In Section \ref{SectionMainResults}, we use ideas in \cite{BaudierLancienSchlumprecht2017} and \cite{KaltonRandrianarivony2008} in order to prove a general lemma (see Lemma \ref{lemmaGen} below) which we use to prove Theorem \ref{MainResult}. In Subsection \ref{SubsectionLowerEst}, we define asymptotic games in order to obtain a version of Corollary \ref{CorMainResult} to spaces satisfying asymptotic lower $\ell_p$-estimates (see Theorem \ref{ThmAsymp} below). In the case of Banach spaces $Y$ with separable dual, Theorem \ref{ThmAsymp} generalizes Corollary \ref{CorMainResult} above. Subsection \ref{SubsectionWeakStar} is dedicated to noticing that, in the context of dual Banach spaces, our results have weak$^*$ analogs. At last, in Section \ref{SectionExample}, we give a sufficient condition for a coarse map $X\to Y$ to be weakly sequentially continuous (under some assumptions on the target space $Y$) and use this to show that $\ell_p$ strongly embeds into $\ell_q$ by a weakly sequentially continuous map, for $1\leq p<q<\infty$.

At last, we would like to point out that, while this paper deals with a notion of nonlinear embedding between Banach spaces and how it relates to asymptotic uniform convexity, some work has been done on nonlinear embeddings between Banach spaces and asymptotic uniform smoothness (see \cite{BaudierLancienSchlumprecht2017},  \cite{Braga2},  and \cite{KaltonRandrianarivony2008}).

\section{Background.}\label{SectionBackground}
Let $X$ be a Banach space. We denote its closed unit ball by $B_X$ and its closed unit sphere by $\partial B_X$. In these notes, all subspaces of a given Banach space are assumed to be norm closed. We denote the set consisting of all finite codimensional subspaces of $X$ by $\text{cof}(X)$. Given a subset $A\subset \N$ and $k\in\N$, we denote by $[A]^k$ the set of all subsets of $A$ with $k$ elements. Given $\bar{n}\in [A]^k$, we always write $\bar{n}=(n_1,\ldots,n_k)$ in increasing order, i.e., with $n_1<\ldots<n_k$. 

If $X$ and $Y$ are Banach spaces,  we write $X\equiv Y$ if $X$ and $Y$ are linearly isometric to each other. We say that a map $f:X\to Y$ is \emph{weakly sequentially continuous} if for every sequence $(x_n)_n$ in $X$ which weakly converges to $x\in X$ it follows that $(f(x_n))_n$ weakly converges to $f(x)$.

In these notes, we make repetitive use of the following version of Elton's Near Unconditionality Theorem (see \cite{Elton1978}).

\begin{thm}[\textbf{J. Elton, 1978}]\label{Elton}
There exists $D>0$ with the following property: every normalized weakly null sequence  $(x_n)_n$ in a Banach space $X$ has a subsequence $(y_n)_n$ so that
\[\Big\|\sum_{j=1}^k\eps_jy_{n_j}\Big\|\leq D \Big\|\sum_{j=1}^k \delta_jy_{n_j}\Big\|,\] 
for all $n_1<\ldots<n_k\in \N$ and all  $\eps_1,\ldots,\eps_k,\delta_1,\ldots,\delta_k\in\{-1,1\}$. 
\end{thm}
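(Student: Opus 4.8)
The plan is to derive the stated inequality from the standard ``suppression'' form of Elton's theorem and then to indicate how that form is proved. The suppression form asserts that there is a universal constant $D_0>0$ such that every normalized weakly null sequence in a Banach space has a subsequence $(y_n)_n$ with
\[\Big\|\sum_{j\in G}\theta_jy_{n_j}\Big\|\leq D_0\Big\|\sum_{j=1}^k\theta_jy_{n_j}\Big\|\]
for all $n_1<\ldots<n_k$ in $\N$, all $G\subseteq\{1,\ldots,k\}$, and all signs $\theta_1,\ldots,\theta_k\in\{-1,1\}$. Granting this, the theorem follows in a few lines: given $\eps_1,\ldots,\eps_k,\delta_1,\ldots,\delta_k\in\{-1,1\}$, put $P=\{j:\eps_j=\delta_j\}$ and $N=\{j:\eps_j=-\delta_j\}$, and set $u=\sum_{j\in P}\delta_jy_{n_j}$ and $v=\sum_{j\in N}\delta_jy_{n_j}$, so that $\sum_{j=1}^k\delta_jy_{n_j}=u+v$ while $\sum_{j=1}^k\eps_jy_{n_j}=u-v$. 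Applying the suppression estimate with the sign pattern $(\delta_j)_j$ to the subsets $P$ and $N$ of $\{1,\ldots,k\}$ yields $\|u\|\leq D_0\|u+v\|$ and $\|v\|\leq D_0\|u+v\|$, and hence
\[\Big\|\sum_{j=1}^k\eps_jy_{n_j}\Big\|=\|u-v\|\leq\|u\|+\|v\|\leq 2D_0\Big\|\sum_{j=1}^k\delta_jy_{n_j}\Big\|,\]
so the theorem holds with $D=2D_0$ (note that $D$ is absolute, independent of $X$ and of the sequence).

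For the suppression form itself, I would first apply the Bessaga--Pe\l czy\'nski selection principle to replace $(x_n)_n$ by a subsequence that is a normalized weakly null basic sequence, say with basis constant at most $2$, so that the natural coordinate projections onto any finite set of indices are uniformly bounded on the closed linear span. Next, for each fixed $k$ I would stabilize the finitely many functions $\bar n\in[\N]^k\mapsto\|\sum_{j=1}^k\theta_jy_{n_j}\|$, one for each $\theta\in\{-1,1\}^k$: since these take values in $[1/2,k]$, one discretizes the range into finitely many multiplicatively short intervals, colors $[\N]^k$ by the resulting finite palette, applies the infinite Ramsey theorem to obtain an infinite set on which every such norm is essentially constant, and then diagonalizes over $k$ (this Ramsey-type stabilization and diagonalization is the standard, and somewhat delicate, bookkeeping part of the argument). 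One is left with a subsequence along which, for each $k$, every sign-sum of length $k$ has norm essentially a fixed value depending only on $k$ and on the sign pattern.

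The crux, and the step I expect to be the main obstacle, is then the combinatorial estimate at the heart of Elton's theorem: along such a subsequence the suppression ratios
\[\sup\Big\{\Big\|\sum_{j\in G}\theta_jy_{n_j}\Big\|\Big/\Big\|\sum_{j=1}^k\theta_jy_{n_j}\Big\|\ :\ n_1<\ldots<n_k,\ G\subseteq\{1,\ldots,k\},\ \theta\in\{-1,1\}^k\Big\}\]
stay bounded by an absolute constant, uniformly in $k$. This cannot be handled term by term; the genuine content is a counting/averaging bound on how many sign patterns can simultaneously force a prescribed partial sum to be much larger than the full sum, together with an iteration scheme in which weak nullity is used to exclude the extremal configurations. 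For this estimate I would follow Elton's original argument (or one of the later streamlined expositions) rather than reprove it from scratch, as it is the one genuinely hard ingredient of the theorem; everything else is the soft reduction of the first paragraph and the Ramsey bookkeeping of the second.
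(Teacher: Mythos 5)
The paper does not prove this statement at all: it is quoted as Elton's theorem and attributed to his 1978 thesis, so there is no internal proof to compare against. Your treatment is consistent with that, and the one piece of genuine mathematics you supply is correct: the passage from the suppression form (subsets $G$ of a fixed sign pattern $\theta$, universal constant $D_0$) to the two-sided sign-change form stated in the paper, via $P=\{j:\eps_j=\delta_j\}$, $N=\{j:\eps_j=-\delta_j\}$, $u+v=\sum_j\delta_jy_{n_j}$, $u-v=\sum_j\eps_jy_{n_j}$, giving $D=2D_0$. One point worth making explicit is why $D_0$ may be taken absolute: Elton's near-unconditionality theorem gives a constant $K(\delta)$ for coefficients with $\delta\le|a_i|\le1$, and this constant genuinely depends on $\delta$ (its behaviour as $\delta\to0$ is a separate, delicate issue); here the coefficients are exactly $\pm1$, i.e.\ $\delta=1$, which is what legitimizes a universal $D$ independent of $X$ and of the sequence. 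Your sketch of the suppression form itself (Bessaga--Pe\l czy\'nski to get a basic subsequence, Ramsey stabilization and diagonalization, then the combinatorial core) is the standard outline, but you explicitly do not prove the core and defer it to Elton's original argument; since the paper likewise simply cites \cite{Elton1978}, this is an acceptable level of detail here rather than a defect, though as a self-contained proof it would of course be incomplete.
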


Let $X$ and $Y$ be Banach spaces and $f:X\to Y$ be a coarse map. Since $X$ is metrically convex, it follows that there exists $L>0$ so that $\omega_f(t)\leq Lt+L$ (see \cite{Kalton2008}, Lemma 1.4). Therefore, for all $\eps>0$, there exists $L>0$ so that 
\[\|f(x)-f(y)\|\leq L\|x-y\|,\ \ \text{if}\ \ \|x-y\|\geq \eps.\]

Let $X$ be a Banach space and let $\overline{\delta}_X$ be the modulus of asymptotic convexity of $X$ (see Section \ref{SectionIntro} for definitions). It is easy to see that $\overline{\delta}_X$ is $1$-Lipschitz, and that for every weakly null sequence $(x_{n})_{n\in\N}$ in $X$, every nonprincipal ultrafilter $\mathcal{U}$ on $\N$ and every $x\in X\setminus \{0\}$, we have that
\[\|x\|\lim_{n,\mathcal{U}}\overline{\delta}_X\Big(\frac{\|x_\lambda\|}{\|x\|}\Big)\leq \lim_{n,\mathcal{U}}\|x+x_\lambda\|-\|x\|.\]

\subsection{Orlicz sequence spaces.}\label{SubsectionOrlicz}
Let $F:[0,\infty)\to[0,\infty)$ be a continuous map so that $F(0)=0$. We define a set $\ell_F\subset \R^\N$ by setting 
 \[\ell_F=\Big\{(x_n)_n\in\R^\N\mid \exists\lambda>0,\ \sum_{j=1}^\infty F\Big(\frac{|x_n|}{\lambda}\Big)<\infty\Big\}.\]
For each $(x_n)_n\in \ell_F$, let 
\[\|(x_n)_n\|_{F}\coloneqq\inf\Big\{\lambda>0\mid  \sum_{j=1}^\infty F\Big(\frac{|x_n|}{\lambda}\Big)\leq 1\Big\}.\]
Notice that, besides the notation being used here, $\|\cdot\|_F$ does not need to be a norm on $\ell_F$.  We denote by $(e_n)_n$ the sequence in $\ell_F$ so that the $n$'th coordinate of $e_n\in \ell_F$ is $1$ and its $j$'th coordinate is $0$,  for all $n\neq j$. 

A nonzero function $F:[0,\infty)\to[0,\infty)$ is an \emph{Orlicz function} if it is continuous, nondecreasing,  convex, and  satisfies $F(0)=0$. It is well know that if $F$ is an Orlicz function, then $\|\cdot\|_F$ is a norm on $\ell_F$ which makes $\ell_F$ into a Banach space. The space $(\ell_F,\|\cdot\|_F)$ is called the \emph{Orlicz sequence space associated to $F$}. We refer to \cite{LindenstraussTzafriri1971} for more on Orlicz sequence spaces.

Let $X$ be a Banach space. Although $\overline{\delta}_X$ is not a convex function in general, the function $\overline{\delta}_X(t)/t$ is increasing. Therefore, we can define a convex function by letting
\[\tilde{\delta}_X(t)=\int_{0}^t\frac{\overline{\delta}_X(s)}{s}ds,\]
for all $t\geq 0$. So, $\tilde{\delta}_X$ is a $1$-Lipschitz Orlicz function and we can define the Orlicz sequence space $\ell_{\tilde{\delta}_X}$. Notice that $\overline{\delta}_X(t/2)\leq \tilde{\delta}_X(t)\leq \overline{\delta}_X(t)$, for all $t\geq 0$, i.e., the functions $\overline{\delta}_X$ and $\tilde{\delta}_X$ are \emph{equivalent}. In particular, we have that
\begin{equation}\label{Ohoh}
\frac{1}{2}\|e_1+\ldots +e_k\|_{\overline{\delta}_Y}\leq \|e_1+\ldots +e_k\|_{\tilde{\delta}_Y}\leq \|e_1+\ldots +e_k\|_{\overline{\delta}_Y},
\end{equation}
 for all $k\in\N$. The space $(\ell_{\tilde{\delta}_X},\|\cdot\|_{\tilde{\delta}_X})$ will play an important role in this paper.

\subsection{Spreading models.}\label{SubsectionSpreading}
 
Let $(X,\|\cdot\|)$ be a Banach space and $(x_n)_n$ be a bounded sequence without Cauchy subsequences. There exists a  sequence $(e_n)_n$ in a Banach space $(S,\|\cdot\|_S)$ so that  for all $\eps>0$ and all $k\in\N$, there exists $\ell\in\N$ such that 
\[\Bigg| \Big\|\sum_{i=1}^ka_i x_{n_i}\Big\|-\Big\|\sum_{i=1}^ka_i e_{i}\Big\|_S\Bigg|\leq \eps,\]
for all $\ell\leq n_1<\ldots<n_k$ and all $a_1,\ldots,a_k\in [-1,1]$  (see \cite{G-D}, Chapter 2, Section 2, for a proof of this fact).  
The sequence $(e_n)_n$ is called the \emph{spreading model of the sequence $(x_n)_n$}.

\begin{remark}
Let $X$ be a Banach space and $F:[0,\infty)\to[0,\infty)$ be a continuous function so that $F(0)=0$. We would like to point out that we  abuse of notation, and we  use $(e_n)_n$ to denote both the spreading model of a sequence in a Banach space $X$ and the unit sequence in  $\ell_F$ defined in Subsection \ref{SubsectionOrlicz}. 
\end{remark}

\section{Restrictions for the existence of  weakly sequentially continuous embeddings.}\label{SectionMainResults}

In order to prove Theorem \ref{MainResult}, we need to prove some  technical  lemmas first. Fix a Lipschitz Orlicz function $F:[0,\infty)\to[0,\infty)$. By Fekete's lemma, the limit $\theta= \lim_{t\to\infty}F(t)/t$ exists and it is easy to see that $\theta>0$. We define a sequence of  norms $(N_k)_k=(N^F_k)_k$ as follows. Let $N_2= N^F_2$ be a map on $\R^2$ given by 
\[N_2(\xi_1,\xi_2)=\left\{\begin{array}{l l}
|\xi_1|F\Big(\frac{|\xi_2|}{|\xi_1|}\Big)+|\xi_1|, &\xi_1\neq 0\\
\theta|\xi_2|, &\xi_1=0.
\end{array}\right.\]
By the definition of $\theta$, we have that $N_2$ is continuous, and as $F$ is convex it follows that $N_2$ is a norm on $\R^2$. We now proceed by induction. Suppose $k\geq 3$ and that $N_{k-1}= N^F_{k-1}:\R^{k-1}\to \R$ has already been defined. We define $N_k= N^F_k:\R^k\to \R$ by letting 
\[N_k(\xi_1,\ldots,\xi_k)=N_2(N_{k-1}(\xi_1,\ldots,\xi_{k-1}),\xi_k),\ \ \text{for all}\ \ \xi_1,\ldots,\xi_k\in\R.\]
We also denote the usual norm on $\R$ by $N_1$, i.e., $N_1(\xi)=|\xi|$, for all $\xi\in\R$. So, $N_2(\xi_1,\xi_2)=N_2(N_1(\xi_1),\xi_2)$ also holds, for all $\xi_1,\xi_2\in \R$.

Norms of this type were first considered in \cite{Kalton1993}, and they were used to solve problems regarding asymptotic uniform smothness  in both \cite{KaltonRandrianarivony2008} and \cite{Kalton2013}. It is clear that the $N_k$'s are \emph{absolute norms}, i.e., 
\[ N_k(\xi_1,\ldots,\xi_k)=N_k(|\xi_1|,\ldots,|\xi_k|),\]
for all $\xi_1,\ldots,\xi_k\in \R$. Hence, it follows that the unit vectors of $\R^k$ form a $1$-unconditional basis for $\R^k$ endowed with the norm $N_k$, i.e., 
\[ N_k(\xi_1,\ldots,\xi_k)\leq N_k(\zeta_1,\ldots,\zeta_k),\]
for all $\xi_1,\zeta_1,\ldots,\xi_k,\zeta_k\in\R$, with $|\xi_j|\leq |\zeta_j|$, for all $j\in\{1,\ldots,k\}$ (see \cite{Kalton1993}, Proposition 3.2(1)).

\begin{lemma}\label{Nkdes}
Let $F:[0,\infty)\to[0,\infty)$ be a Lipschitz Orlicz function  so that $F(1)\leq 1$ and let $(N_k)_k\coloneqq (N^F_k)_k$ be defined as above. For all $k\in\N$, we have that
\[N_k(\underbrace{1,\ldots,1}_{k\text{-times}})\geq \|e_1+\ldots+e_k\|_{\tilde{\delta}_Y}.\]
\end{lemma}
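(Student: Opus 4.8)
The plan is to reduce everything to a single scalar recursion. Write $a_k:=N_k(1,\ldots,1)$ ($k$ entries) and observe that, recalling $\|e_1+\ldots+e_k\|_{F}=\inf\{\lambda>0:\sum_{j=1}^kF(1/\lambda)\le1\}=\inf\{\lambda>0:kF(1/\lambda)\le1\}$, the statement amounts to proving $kF(1/a_k)\le1$: this exhibits $a_k$ as a member of the set over which that infimum is taken, hence $\|e_1+\ldots+e_k\|_{F}\le a_k$. The inequality as displayed is then the special case $F=\tilde{\delta}_Y$, which is legitimate since $\tilde{\delta}_Y$ is a $1$-Lipschitz Orlicz function with $\tilde{\delta}_Y(1)=\int_0^1\overline{\delta}_Y(s)/s\,ds\le1$, using $\overline{\delta}_Y(s)\le s$.

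First I would extract the recursion for $a_k$. From the inductive definition $N_{k+1}(\xi_1,\ldots,\xi_{k+1})=N_2(N_k(\xi_1,\ldots,\xi_k),\xi_{k+1})$ and the explicit formula for $N_2$ on the region $\xi_1\neq0$, we get $a_{k+1}=N_2(a_k,1)=a_kF(1/a_k)+a_k=a_k(1+F(1/a_k))$, with $a_1=1$; in particular $a_k\ge1>0$ for all $k$. Putting $b_k:=1/a_k$, this reads $b_1=1$ and $b_{k+1}=b_k/(1+F(b_k))$.

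The core claim is $F(b_k)\le1/k$ for all $k\in\N$, which I would prove by induction. The base case $k=1$ is exactly the hypothesis $F(b_1)=F(1)\le1$. For the inductive step, note that $b_{k+1}=\frac{1}{1+F(b_k)}\,b_k+\bigl(1-\frac{1}{1+F(b_k)}\bigr)\cdot0$ is a convex combination of $b_k$ and $0$; since $F$ is convex with $F(0)=0$, this gives $F(b_{k+1})\le\frac{1}{1+F(b_k)}F(b_k)=\frac{F(b_k)}{1+F(b_k)}$, and because $t\mapsto t/(1+t)$ is increasing and $F(b_k)\le1/k$, the right-hand side is at most $\frac{1/k}{1+1/k}=\frac{1}{k+1}$. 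This closes the induction, and then $kF(1/a_k)=kF(b_k)\le1$, which is what we needed.

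I do not anticipate a genuine obstacle: the one idea that makes the proof work is isolating the recursion $b_{k+1}=b_k/(1+F(b_k))$ and running the induction with the hypothesis $F(b_k)\le1/k$, after which the elementary convexity inequality $F(\lambda t)\le\lambda F(t)$ for $\lambda\in[0,1]$ does all the work and the assumption $F(1)\le1$ is seen to be precisely the base case. The only thing worth double-checking is the bookkeeping identifying $\|e_1+\ldots+e_k\|_F$ with $\inf\{\lambda>0:kF(1/\lambda)\le1\}$; it is also reassuring that for $F(t)=t$ one has $N_k=\|\cdot\|_{\ell_1^k}$ and $a_k=k=\|e_1+\ldots+e_k\|_F$, so the estimate is sharp and no constant can be inserted.
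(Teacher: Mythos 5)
Your proof is correct, and it takes a genuinely different route from the paper's. Both arguments are inductions on $k$, but the paper's induction hypothesis is the norm inequality $N_{k-1}(1,\ldots,1)\geq\|e_1+\ldots+e_{k-1}\|_F$ itself; to push it one step further the paper needs two extra ingredients: the identity $(k-1)F\bigl(1/\|e_1+\ldots+e_{k-1}\|_F\bigr)=1$ (which relies on continuity of $F$ and the infimum in the definition of $\|\cdot\|_F$ being attained), and a separate combinatorial averaging lemma (Lemma \ref{111}) asserting $\|e_1+\ldots+e_k\|\leq\|e_1+\ldots+e_{k-1}\|\bigl(1+\tfrac{1}{k-1}\bigr)$ for any spreading-invariant sequence. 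You instead carry the stronger scalar hypothesis $F(1/a_k)\leq 1/k$ through the recursion $a_{k+1}=a_k(1+F(1/a_k))$, closing the induction with nothing more than the elementary convexity bound $F(\lambda t)\leq\lambda F(t)$ and the monotonicity of $t\mapsto t/(1+t)$, and then read off $\|e_1+\ldots+e_k\|_F\leq a_k$ directly from the definition of the Luxemburg norm. This makes your argument more self-contained (no Lemma \ref{111}, no attained-infimum identity, and no appeal to the $1$-unconditionality of $N_2$, which the paper implicitly uses when replacing $N_{k-1}(1,\ldots,1)$ by the smaller quantity $\|e_1+\ldots+e_{k-1}\|_F$ inside $N_2$); what the paper's route buys in exchange is the auxiliary Lemma \ref{111}, which is of some independent interest. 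Your sanity checks — that $\tilde{\delta}_Y(1)\leq 1$ follows from $\overline{\delta}_Y(s)\leq s$, and that $F(t)=t$ gives equality — are both correct, and your constant matches the paper's (both improve on the factor $2$ in Kalton's Lemma 4.3).
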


We point out that Kalton proved in \cite{Kalton2013}, Lemma 4.3, that $\|e_1+\ldots+e_k\|_F\leq 2N_k(1,\ldots,1)$, for all $k\in\N$. Although Kalton's result would be enough for our goals, since we obtain a better constant and for the convenience of the reader, we present the proof of Lemma \ref{Nkdes}.

\begin{proof}[Proof of Lemma \ref{Nkdes}]
We proceed by induction on $k\in\N$. For $k=1$ the result follows since $F(1)\leq 1$ implies $\|e_1\|_F\leq 1=N_1(1)$. Assume the result holds for $k-1$.  Then we have 
\begin{align*}
N_k(1,\ldots,1)&=N_2(N_{k-1}(1,\ldots,1),1)\\
&\geq N_2(\|e_1+\ldots+e_{k-1}\|_F,1)\\
&= \|e_1+\ldots+e_{k-1}\|_F
\Big(1+F\Big(\frac{1}{\|e_1+\ldots+e_{k-1}\|_F}\Big)\Big).
\end{align*}
Since $F$ is continuous, by the definition of the norm $\|\cdot\|_F$, we must have 
\[(k-1)F\Big(\frac{1}{\|e_1+\ldots+e_{k-1}\|_F}\Big)=1.\]
So, 
\[N_k(1,\ldots,1)\geq \|e_1+\ldots+e_{k-1}\|_{F}
\Big(1+\frac{1}{k-1}\Big).\]
We need a general lemma.

\begin{lemma}\label{111}
Let $(e_n)_n$ be a sequence in a Banach space satisfying $\|e_1+\ldots+e_k\|=\|e_{n_1}+\ldots+e_{n_k}\|$, for all $n_1<\ldots<n_k\in\N$. For all  $k\in\N$, the following holds
\[\|e_1+\ldots+e_k\|\leq  \|e_1+\ldots+e_{k-1}\|
\Big(1+\frac{1}{k-1}\Big).\]
\end{lemma}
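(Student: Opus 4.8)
The plan is to prove Lemma \ref{111} by a direct averaging argument, exploiting the spreading property of $(e_n)_n$: the norm of a sum of $k-1$ of the $e_i$'s does not depend on which $k-1$ indices are chosen, so we may average $\|e_1+\ldots+e_k\|$ over all ways of deleting one term. Concretely, I would start from the identity
\[
(k-1)(e_1+\ldots+e_k)=\sum_{j=1}^k\big(e_1+\ldots+\widehat{e_j}+\ldots+e_k\big),
\]
where $\widehat{e_j}$ means that term is omitted, which holds because each $e_i$ appears in exactly $k-1$ of the $k$ sums on the right-hand side.

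Next I would apply the triangle inequality to the right-hand side, obtaining
\[
(k-1)\|e_1+\ldots+e_k\|\le\sum_{j=1}^k\big\|e_1+\ldots+\widehat{e_j}+\ldots+e_k\big\|.
\]
Now the hypothesis that $(e_n)_n$ is spreading — i.e. $\|e_1+\ldots+e_{k-1}\|=\|e_{n_1}+\ldots+e_{n_{k-1}}\|$ for every choice $n_1<\ldots<n_{k-1}$ — shows that each of the $k$ summands on the right equals $\|e_1+\ldots+e_{k-1}\|$, since deleting one index from $\{1,\ldots,k\}$ leaves an increasing $(k-1)$-tuple. Hence the right-hand side is $k\,\|e_1+\ldots+e_{k-1}\|$, and dividing by $k-1$ gives
\[
\|e_1+\ldots+e_k\|\le\frac{k}{k-1}\,\|e_1+\ldots+e_{k-1}\|=\Big(1+\frac{1}{k-1}\Big)\|e_1+\ldots+e_{k-1}\|,
\]
which is exactly the claim.

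I do not anticipate a serious obstacle here; the only point requiring a little care is making sure the spreading hypothesis is applied to genuinely increasing tuples, which it is, since removing a single element from $\{1,\ldots,k\}$ and listing the rest in order produces a valid increasing sequence of length $k-1$. With Lemma \ref{111} in hand, the proof of Lemma \ref{Nkdes} concludes immediately: the induction hypothesis gives $N_{k-1}(1,\ldots,1)\ge\|e_1+\ldots+e_{k-1}\|_{\tilde\delta_Y}$, the computation already carried out yields $N_k(1,\ldots,1)\ge\big(1+\tfrac1{k-1}\big)\|e_1+\ldots+e_{k-1}\|_{\tilde\delta_Y}$, and Lemma \ref{111} applied in the Banach space $\ell_{\tilde\delta_Y}$ bounds this below by $\|e_1+\ldots+e_k\|_{\tilde\delta_Y}$, completing the induction.
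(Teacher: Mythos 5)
Your proof is correct and is essentially the same argument as the paper's: both rest on the identity $(k-1)(e_1+\ldots+e_k)=\sum_{j=1}^k(e_1+\ldots+\widehat{e_j}+\ldots+e_k)$, the triangle inequality, and the spreading hypothesis to evaluate each summand. The only cosmetic difference is that the paper runs the argument by contradiction while you argue directly, which is if anything slightly cleaner.
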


\begin{proof}
Suppose the inequality above does not hold for some $k\in\N$. So, 
\[\Big(1+\frac{1}{k-1}\Big)\|e_{n_1}+\ldots+e_{n_{k-1}}\|<
\|e_1+\ldots+e_k\|,\]
for all $\bar{n}=(n_1,\ldots,n_{k-1})\in[\{1,\ldots,k\}]^{k-1}$. Then

\begin{align*}
k\|e_1+\ldots+e_k\|& =\Big(1+\frac{1}{k-1}\Big)\|(k-1)e_1+\ldots+(k-1)e_k\|\\
&\leq \Big(1+\frac{1}{k-1}\Big)\sum_{\bar{n}\in[\{1,\ldots,k\}]^{k-1}}\|e_{n_1}+\ldots+e_{e_{k-1}}\|\\
&< k\|e_1+\ldots+e_k\|, 
\end{align*}
which gives us a contradiction.
\end{proof}
Since $(e_n)_n$  satisfies the hypothesis of Lemma \ref{111}, the result now clearly follows for $k$, and our induction is complete.
\end{proof}

Let $X$ be a Banach space and $\beta>\alpha>0$. We say that a sequence $(x_n)_n$ in $X$ is \emph{$[\alpha,\beta]$-separated} if $\|x_n-x_m\|\in[\alpha,\beta]$, for all $n\neq m$.

\begin{lemma}\label{lemmaGenNOVO}
Let $X$ and $Y$ be  Banach spaces, and let $(N_k)_k\coloneqq (N^{\tilde{\delta}_Y}_k)_k$. Let $f:X\to Y$ be weakly sequentially continuous  and let  $\beta>\alpha>0$. Given an $[\alpha,\beta]$-separated weakly convergent  sequence  $(x_n)_n$ and $k\in\N$, we define    $h_k:[\N]^k\to Y$ by letting \[h_k(\bar{m})=f(x_{m_1}+\ldots+x_{m_k}),\] for all  $\bar{m}=(m_1,\ldots,m_k)\in[\N]^k$. Then for every $\eps>0$ and every infinite subset $\M\subset\N$ there exist $\bar{m}=(m_1,\ldots,m_k),\bar{n}=(n_1,\ldots,n_k)\in[\M]^k$, with $m_1<n_1<\ldots<m_k<n_k$, so that
\[\|h_k(\bar{m})-h_k(\bar{n})\|\geq N_k\Big(\frac{b}{2},\ldots,\frac{b}{2}\Big)-\eps,\]
where 
\[b\coloneqq\inf_{\|x-y\|\in[\alpha,\beta]}\|f(x)-f(y)\|.\]
\end{lemma}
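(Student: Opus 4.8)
The plan is to prove the inequality by induction on $k$, mirroring the recursive definition $N_k(\xi_1,\ldots,\xi_k)=N_2(N_{k-1}(\xi_1,\ldots,\xi_{k-1}),\xi_k)$. The statement for $k=1$ is essentially Property $(*)$ together with the definition of $b$: among $x_{m_1}$ and $x_{n_1}$ with $\|x_{m_1}-x_{n_1}\|\in[\alpha,\beta]$ we get $\|h_1(\bar m)-h_1(\bar n)\|\geq b\geq N_1(b/2)$ (in fact $\geq b$, and we have slack). For the inductive step, first I would apply the inductive hypothesis inside an infinite subset $\M'\subset\M$ to obtain indices witnessing the estimate at level $k-1$ for the sequence $(x_n)_n$; the key point is that this can be done while keeping control of which ``block'' of indices is used. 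Then I would use the weak convergence of $(x_n)_n$: fix the first $2(k-1)$ indices $m_1<n_1<\cdots<m_{k-1}<n_{k-1}$ and let the last pair $m_k<n_k$ run to infinity along $\M$. Since $f$ is weakly sequentially continuous and $x_{m_1}+\cdots+x_{m_{k-1}}+x_{m_k}\rightharpoonup (x_{m_1}+\cdots+x_{m_{k-1}})+x_\infty$ (where $x_\infty$ is the weak limit), and similarly for the $\bar n$-side, the vectors $h_k(\bar m)$ and $h_k(\bar n)$ weakly converge; the difference $h_k(\bar m)-h_k(\bar n)$ weakly converges to $f(x_{m_1}+\cdots+x_{m_{k-1}}+x_\infty)-f(x_{n_1}+\cdots+x_{n_{k-1}}+x_\infty)$, whose norm is $\geq b$ by Property $(*)$ again (the two arguments differ by $(x_{m_1}+\cdots+x_{m_{k-1}})-(x_{n_1}+\cdots+x_{n_{k-1}})$, but that is not directly in $[\alpha,\beta]$, so I will actually arrange the separation differently — see below).

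The right way to run the argument, following \cite{BaudierLancienSchlumprecht2017} and \cite{KaltonRandrianarivony2008}, is to exploit the structure of $N_2$: if $u,v\in Y$ with $v\rightharpoonup v_0$ weakly, then by the lower estimate for $\overline\delta_Y$ recalled in Section \ref{SectionBackground} (and $\tilde\delta_Y\geq \overline\delta_Y(\cdot/2)$, hence the $N_2$ built from $\tilde\delta_Y$ is an appropriate lower bound), one has $\lim_{n,\mathcal U}\|u+(v_n-v_0)\|\geq N_2(\|u\|,\liminf\|v_n-v_0\|)$ up to the normalization built into $N_2$. Concretely: write $h_k(\bar m)-h_k(\bar n)=\big(h_{k-1}(\bar m')-h_{k-1}(\bar n')\big)+\big(\text{tail difference}\big)$ where $\bar m'=(m_1,\ldots,m_{k-1})$, and observe that as $m_k,n_k\to\infty$ the ``tail difference'' term converges weakly to $0$ while its norm stays $\geq b$ (this is where weak sequential continuity of $f$ enters, applied to the two sequences $x_{m_1}+\cdots+x_{m_{k-1}}+x_{m_k}$ and $x_{n_1}+\cdots+x_{n_{k-1}}+x_{n_k}$, whose difference tends weakly to the fixed vector $(x_{m_1}+\cdots+x_{m_{k-1}})-(x_{n_1}+\cdots+x_{n_{k-1}})$ — so actually I should take the difference of $h_k(\bar m)$ with $h_{k-1}$-type quantities, not of $h_k(\bar m)$ with $h_k(\bar n)$; I will set this up so that the ``far apart'' pair sits at the outermost position). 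Applying the $\overline\delta_Y$ inequality with $u=h_{k-1}(\bar m')-h_{k-1}(\bar n')$ (whose norm is, by induction, at least $N_{k-1}(b/2,\ldots,b/2)-\eps'$) and the weakly null tail of norm $\geq b$, and using the definition of $N_2$ in terms of $\tilde\delta_Y$, yields $\|h_k(\bar m)-h_k(\bar n)\|\geq N_2\big(N_{k-1}(b/2,\ldots,b/2)-\eps', b/2\big)-\eps'' = N_k(b/2,\ldots,b/2)-\eps$ after choosing $\eps',\eps''$ small (using that $N_2$ is continuous and $1$-Lipschitz-ish in its first coordinate). Passing from ``$\lim$ along an ultrafilter'' to ``there exist concrete indices within any infinite $\M$'' is routine: an ultrafilter limit $\geq c$ forces a single index realizing value $>c-\eps$.

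The main obstacle I anticipate is bookkeeping the interleaving condition $m_1<n_1<\cdots<m_k<n_k$ simultaneously with the inductive application: the induction hypothesis must be invoked for the \emph{first} $k-1$ coordinate-pairs living in an infinite set that is itself chosen \emph{after} reserving infinitely many indices for the last pair, and then one lets the last pair escape to infinity. Organizing the quantifiers so that $\eps$ is split correctly across the $k$ levels (a geometric-type splitting $\eps/2,\eps/4,\ldots$) and making sure the ``escape to infinity'' step does not disturb the already-chosen coordinates is the delicate part; everything else is a mechanical combination of Property $(*)$, the weak sequential continuity hypothesis, the asymptotic uniform convexity inequality from Section \ref{SectionBackground}, and the continuity of the absolute norm $N_2$. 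A secondary technical point is that weak convergence of $h_k(\bar m)-h_k(\bar n)$ to a nonzero limit with controlled norm requires applying weak sequential continuity of $f$ to the sum sequence and noting $x_{m_1}+\cdots+x_{m_{k-1}}+x_{m_k}\rightharpoonup x_{m_1}+\cdots+x_{m_{k-1}}+x_\infty$, which is immediate since $(x_n)_n$ is weakly convergent; I would record this as a one-line observation before the induction.
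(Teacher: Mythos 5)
Your overall strategy --- induction on $k$, peeling off the last pair of indices as a weakly null ``tail'' of norm roughly $b/2$ and combining it with the inductive estimate via the asymptotic uniform convexity inequality and the recursive structure of $N_2$ --- is exactly the strategy of the paper (which in fact proves the stronger Lemma \ref{lemmaGen}, producing explicit vectors $y_i,z_i$ in prescribed finite codimensional subspaces, and deduces Lemma \ref{lemmaGenNOVO} by the triangle inequality). However, there is a genuine gap that you flag twice but never close: the identification of the vector $u$ to which the induction hypothesis is applied. If $x$ denotes the weak limit of $(x_n)_n$, then as $m_k,n_k\to\infty$ the difference $h_k(\bar m)-h_k(\bar n)$ converges weakly to
\[
f\Big(\sum_{j<k}x_{m_j}+x\Big)-f\Big(\sum_{j<k}x_{n_j}+x\Big),
\]
not to $h_{k-1}(\bar m')-h_{k-1}(\bar n')$. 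Your decomposition takes $u=h_{k-1}(\bar m')-h_{k-1}(\bar n')$ and claims the remaining tail is weakly null; when $x\neq 0$ this is false, and the inductive lower bound is being invoked for the wrong vector. The promised ``arrange the separation differently'' is never carried out.

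The paper's resolution is a translation trick: set $\tilde h(\bar m')=f(x_{m_1}+\cdots+x_{m_{k-1}}+x)$ and observe that $\tilde h$ is precisely the map $h_{k-1}$ associated with the translated sequence $(x_n+x/(k-1))_n$, which is again $[\alpha,\beta]$-separated (translation does not change differences) and weakly convergent. Since the statement is quantified over all such sequences, the $(k-1)$-case applies to this translated sequence and lower-bounds $\|\tilde h(\bar m')-\tilde h(\bar n')\|$ by $N_{k-1}(b/2,\ldots,b/2)-\delta$; one then runs your AUC step with $u$ an approximation of $\tilde h(\bar m')-\tilde h(\bar n')$ and the genuinely weakly null tails $u_n=h_k(\bar m',n)-\tilde h(\bar m')$ and $v_n=h_k(\bar n',n)-\tilde h(\bar n')$, choosing $n_k=i(m_k)>m_k$ so that $\|u_{m_k}-v_{i(m_k)}\|\geq b/2-\delta$ (this lower bound also requires a short lower-semicontinuity argument rather than being automatic, as you assert when you say the tail's norm ``stays $\geq b$''). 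This is exactly why the lemma is formulated for weakly convergent rather than weakly null sequences. With this modification your outline becomes the paper's proof; without it the inductive step fails whenever the weak limit is nonzero.
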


Lemma \ref{lemmaGenNOVO} will be a simple consequence of the following more elaborated lemma. We point out that the proof of Lemma \ref{lemmaGen} below is inspired in Proposition 4.1 of \cite{BaudierLancienSchlumprecht2017}.

\begin{lemma}\label{lemmaGen}
Let $X$ and $Y$ be  Banach spaces, and let $(N_k)_k\coloneqq (N^{\tilde{\delta}_Y}_k)_k$. Let $f:X\to Y$ be a   weakly sequentially continuous. Let  $\beta>\alpha>0$. Given an $[\alpha,\beta]$-separated weakly convergent  sequence  $(x_n)_n$ and $k\in\N$, we define    $h_k:[\N]^k\to Y$ by letting \[h_k(\bar{m})=f(x_{m_1}+\ldots+x_{m_k}),\] for all  $\bar{m}=(m_1,\ldots,m_k)\in[\N]^k$. 

The following holds: for all $k\in\N$,  all $\eps>0$, and all $[\alpha,\beta]$-separated weakly convergent  sequence  $(x_n)_n$ in $X$,  there exists  $y\in Y$ so that for all infinite subset $\M\subset \N$, 
\[\forall Y_1\in\text{cof}(Y),\ \exists y_1,z_1\in Y_1,\ \ldots, \ \forall Y_k\in\text{cof}(Y),\ \exists y_k,z_k\in Y_k,\] there exist $\bar{m}=(m_1,\ldots,m_k),\bar{n}=(n_1,\ldots,n_k)\in[\M]^k$, with $m_1<n_1<\ldots<m_k<n_k$, so that

\begin{enumerate}[(i)]
\item $\|h_k(\bar{m})-(y+y_1+\ldots+y_k)\|\leq \eps$,
\item $\|h_k(\bar{n})-(y+z_1+\ldots+z_k)\|\leq \eps$, 
\item $\|y_i-z_i\|\geq b/2-\eps$, for all $i\in\{1,\ldots,k\}$, and
\item $\|y_1-z_1+\ldots+y_k-z_k\|\geq N_k\Big(\frac{b}{2},\ldots,\frac{b}{2}\Big)-\eps$,
\end{enumerate}
where 
\[b\coloneqq\inf_{\|x-y\|\in[\alpha,\beta]}\|f(x)-f(y)\|.\]
\end{lemma}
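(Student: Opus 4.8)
\textbf{Proof plan for Lemma \ref{lemmaGen}.}

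The plan is to induct on $k$. The inductive statement must be uniform enough to feed back into itself: at stage $k$, given $\eps$ and an $[\alpha,\beta]$-separated weakly convergent sequence $(x_n)_n$, we produce the "base point" $y$ (which should be $\lim_n f$-value of a suitable averaged/diagonal subsequence, obtained via weak sequential continuity), and then play the finite-codimensional subspace game coordinate by coordinate. For $k=1$: pass to a subsequence of $(x_n)_n$ still weakly convergent (to some $x_\infty$); since $f$ is weakly sequentially continuous and $x_n + $ (a fixed large index translate) converges weakly, the images $h_1(m) = f(x_m)$ converge weakly to some $y$. Given any $Y_1 \in \mathrm{cof}(Y)$, I want to extract $\bar m = (m_1)$, $\bar n = (n_1)$ with $m_1 < n_1$ so that $h_1(\bar m) - y$ and $h_1(\bar n) - y$ are both within $\eps$ of $Y_1$, and $\|y_1 - z_1\| \ge b/2 - \eps$ while also $\|y_1 - z_1\| \ge N_1(b/2) - \eps = b/2 - \eps$. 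Here one uses: $\|h_1(\bar m) - h_1(\bar n)\| = \|f(x_{m_1}) - f(x_{n_1})\| \ge b$ since $\|x_{m_1} - x_{n_1}\| \in [\alpha,\beta]$; the weak convergence of $(f(x_m))_m$ to $y$ lets us (by Mazur/perturbation) choose indices so the differences $f(x_{m_1}) - y$ land approximately in $Y_1$ after a small perturbation, setting $y_1 := $ (perturbed) $h_1(\bar m) - y$, $z_1 := $ (perturbed) $h_1(\bar n) - y$; then $\|y_1 - z_1\| \ge b - 2\eps'$, and since $N_1(b/2)=b/2 \le b$ the estimate (iv) is weaker than what we have. (One should be a bit careful and use $b/2$ rather than $b$ at stage $1$ precisely so the recursion closes — I will keep the slack.)

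For the inductive step, assume the statement for $k-1$. Given $k$, $\eps$, and $(x_n)_n$, first fix the base point $y$ as the weak limit of $h_k$-values along a good subsequence. Now I am given $Y_1 \in \mathrm{cof}(Y)$ and must produce $y_1, z_1 \in Y_1$ and then be ready to respond to $Y_2, \dots, Y_k$. The idea: use weak sequential continuity to split $h_k(\bar m) = f(x_{m_1} + \dots + x_{m_k})$ as (approximately) $y + y_1 + (\text{tail depending on } m_2,\dots,m_k)$, where $y_1$ is "the increment from the first index." Concretely, fix a large block of indices; for the first coordinate, letting the first index run to infinity along a subsequence, $f(x_{m_1} + w) \rightharpoonup $ some limit depending on $w = x_{m_2}+\dots+x_{m_k}$; subtract and perturb into $Y_1$. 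Then the \emph{residual} map $w \mapsto f(x_{m_1} + w) - y_1 - y$, viewed as a coarse weakly sequentially continuous map of the remaining $k-1$ variables applied to the shifted sequence $(x_{m_1} + x_n)_{n > m_1}$ — which is again $[\alpha,\beta]$-separated and weakly convergent — satisfies the inductive hypothesis with $k-1$, a smaller $\eps$, and base point $y + y_1$. Feed $Y_2, \dots, Y_k$ into that. This yields $y_2, \dots, y_k, z_2, \dots, z_k$ and indices $m_2 < n_2 < \dots < m_k < n_k$ with (i)--(iii) for the last $k-1$ coordinates and (iv) with $N_{k-1}(b/2, \dots, b/2)$ for $\|y_2 - z_2 + \dots + y_k - z_k\|$. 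The point $z_1$: do the analogous extraction for the $\bar n$ branch (choose $n_1$ after all the $m_i$'s but before $n_2$); here the weak-null nature of $x_n - x_\infty$ and the asymptotic-uniform-convexity inequality recorded in Section \ref{SectionBackground} (the displayed inequality $\|x\|\lim \overline\delta_X(\|x_\lambda\|/\|x\|) \le \lim\|x + x_\lambda\| - \|x\|$) is what upgrades the additive estimate on $\sum_i (y_i - z_i)$ from $N_{k-1}$ to $N_k$: writing $v = \sum_{i=2}^k (y_i - z_i)$, $u = y_1 - z_1$, and using that $y_1 - z_1$ is (approximately) a weakly null increment relative to the span of $v$, the definition of $N_2$ via $\overline\delta_Y$ (through $\tilde\delta_Y$) gives $\|u + v\| \gtrsim N_2(\|v\|, \|u\|) \ge N_2(N_{k-1}(b/2,\dots,b/2), b/2) = N_k(b/2,\dots,b/2)$, using monotonicity/1-unconditionality of the $N_k$'s and $\|u\| \ge b/2$, $\|v\| \ge N_{k-1}(\cdots)$.

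The main obstacle is getting the interleaving $m_1 < n_1 < \dots < m_k < n_k$ to be compatible with the recursion while simultaneously (a) keeping all the weak-limit-based perturbations inside the prescribed finite-codimensional subspaces $Y_i$ and (b) routing the $\overline\delta_Y$-estimate through the right "first vs. rest" decomposition so that $N_{k-1}$ composes into $N_k$ via $N_2$. In particular, one must be careful that the residual map used in the inductive step is still weakly sequentially continuous (it is a translate of $f$ precomposed with sums, so it is) and that the sequence handed to the inductive call is genuinely $[\alpha,\beta]$-separated and weakly convergent (translating $(x_n)$ by the fixed vector $x_{m_1}$ preserves both). I also need to track the $\eps$-budget: each of the $k$ coordinate extractions and the final perturbations into the $Y_i$ should consume at most $\eps/Ck$ for a suitable constant, and the AUC inequality is only an asymptotic (ultrafilter) statement, so I pass to a further subsequence at each stage to make it hold up to $\eps$. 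Once Lemma \ref{lemmaGen} is in hand, Lemma \ref{lemmaGenNOVO} follows immediately: take $Y_1 = \dots = Y_k = Y$ (allowed since $Y \in \mathrm{cof}(Y)$), combine (i), (ii) via the triangle inequality with (iv) to get $\|h_k(\bar m) - h_k(\bar n)\| \ge \|y_1 - z_1 + \dots + y_k - z_k\| - 2\eps \ge N_k(b/2,\dots,b/2) - 3\eps$, and rename $3\eps$ as $\eps$.
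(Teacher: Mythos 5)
Your overall architecture (induction on $k$, weak sequential continuity to produce the base point $y$, perturbation of weakly null sequences into the prescribed finite-codimensional subspaces, and one application of the asymptotic-convexity inequality per level to compose $N_{k-1}$ into $N_k$ via $N_2$) is the right one, and your base case and the deduction of Lemma \ref{lemmaGenNOVO} are fine. But the inductive step peels off the \emph{first} coordinate and recurses on the remaining $k-1$, and this ordering creates two genuine gaps. First, the ``increment from the first index'' is not a single vector: $f(x_{m_1}+w)-f(x+w)$ depends on $w=x_{m_2}+\cdots+x_{m_k}$, which has not been chosen when you must commit $y_1,z_1\in Y_1$. Concretely, a single application of the inductive hypothesis to the residual map $w\mapsto f(x_{m_1}+w)$ yields control of $f(x_{m_1}+x_{n_2}+\cdots+x_{n_k})$, not of $h_k(\bar n)=f(x_{n_1}+x_{n_2}+\cdots+x_{n_k})$; converting one into the other would require the first-coordinate increment to be independent of the tail, i.e.\ an additivity that $f$ does not have. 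Running a second, parallel recursion anchored at $x_{n_1}$ does not help, because then the pairs $(y_i,z_i)$ for $i\geq 2$ come from two separate applications of the hypothesis and you lose the joint estimates (iii) and (iv). (Your proposed placement of $n_1$ ``after all the $m_i$'s'' also breaks the required interleaving $m_1<n_1<\cdots<m_k<n_k$, and the shifted sequence should be $(x_n+x_{m_1}/(k-1))_n$ rather than $(x_{m_1}+x_n)_n$, but those are repairable.)

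Second, and independently, the asymptotic-convexity inequality recorded in Section \ref{SectionBackground} gives $\lim_n\|h+w_n\|\geq N_2(\|h\|,\|w_n\|)$ with the \emph{fixed} vector $h$ in the first slot and the \emph{subsequently extracted} weakly null term in the second slot. In your scheme $u=y_1-z_1$ is committed first and $v=\sum_{i\geq 2}(y_i-z_i)$ is built afterwards, so at best you obtain $N_2(\tfrac b2, N_{k-1}(\tfrac b2,\ldots,\tfrac b2))$; you write $N_2(\|v\|,\|u\|)$, which is the order needed to match $N_k=N_2(N_{k-1},\cdot)$, but $N_2$ is not symmetric and that order does not correspond to the order in which your vectors are determined. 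The paper resolves both problems at once by recursing on the first $k-1$ coordinates and leaving the \emph{last} coordinate free: apply the inductive hypothesis to the translated sequence $(x_n+x/(k-1))_n$, whose $(k-1)$-fold sums are $x_{m_1}+\cdots+x_{m_{k-1}}+x=\text{w-}\lim_n(x_{m_1}+\cdots+x_{m_{k-1}}+x_n)$, obtaining $\bar m_0,\bar n_0$ and $(y_j,z_j)_{j<k}$ simultaneously; then $u_n=h_k(\bar m_0,n)-\tilde h(\bar m_0)$ and $v_n=h_k(\bar n_0,n)-\tilde h(\bar n_0)$ are genuine weakly null sequences anchored to already-fixed data, and the convexity inequality is applied with $h=\sum_{j<k}(y_j-z_j)$ fixed and $u_n-v_{i(n)}$ as the perturbation, which is exactly what produces $N_2(N_{k-1}(\tfrac b2,\ldots,\tfrac b2),\tfrac b2)=N_k(\tfrac b2,\ldots,\tfrac b2)$. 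You correctly identified the interleaving and the routing of the $\overline\delta_Y$-estimate as the main obstacles, but the proposal as written does not overcome them.
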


\begin{proof}
Assume $b>0$, and let us prove the result. The case $b=0$ follows similarly. Let us proceed by induction on $k$.    Suppose $k=1$, and let $(x_n)_n$ be an $[\alpha,\beta]$-separated  sequence weakly converging to $x\in X$. As $f$ is  weakly sequentially continuous,  $(h_1(n))_n$ converges weakly to some $y=f(x)\in Y$. As $\|x_n-x_m\|\in[\alpha, \beta]$, for all $m\neq n$, we have that $\|h_1(n)-h_1(m)\|\geq b$, for all $n\neq m$. So, we can assume that $\|h_1(n)-y\|\geq b/2$, for all $n\in\N$. 

Let $\M\subset \N$ be an infinite subset. As $(h_1(n)-y)_{n\in\M}$ is weakly null, for each $Y_1\in\text{cof}(X)$, we can pick $y_1,z_1\in Y_1$ and $m_1<n_1\in\M$ so that $\|h_1(m_1)-y-y_1\|<\eps$, $\|h_1(n_1)-y-z_1\|<\eps$ and $\|y_1\|\leq \|y_1-z_1\|$. In particular, $\|y_1\|,\|z_1\|> b/2-\eps$. Hence, $\|y_1-z_1\|> b/2-\eps$, and the statement follows for $k=1$.

Assume  that our statement is proved for $k-1$, let us prove it for $k$. Let $(x_n)_n$ be an $[\alpha,\beta]$-separated  weakly convergent sequence, say $x=w\text{-}\lim_nx_n$. For each $\bar{m}\in[\N]^{k-1}$, let $\tilde{h}(\bar{m})=f(x_{m_1}+\ldots+ x_{m_{k-1}}+x)$. As $f$ is  weakly sequentially continuous, we have that $\tilde{h}(\bar{m})=w\text{-}\lim_n h_k(\bar{m},n)$, for all $\bar{m}\in[\N]^{k-1}$. As $(x_n+x/(k-1))_n$ is an $[\alpha,\beta]$-separated weakly convergent sequence, we can apply the induction hypothesis to obtain  $y\in Y$ so that for every infinite subset $\M\subset \N$, 
\[\forall Y_1\in\text{cof}(Y),\ \exists y_1,z_1\in Y_1,\ \ldots, \ \forall Y_{k-1}\in\text{cof}(Y),\ \exists y_{k-1},z_{k-1}\in Y_{k-1},\] 
there exist $\bar{m}_0=(m_1,\ldots,m_{k-1}),\bar{n_0}=(n_1,\ldots,n_{k-1})\in[\M]^{k-1}$, with $m_1<n_1<\ldots<m_{k-1}<n_{k-1}$, so that

\begin{enumerate}[(i)]
\item $\|\tilde{h}(\bar{m}_0)-(y+y_1+\ldots+y_{k-1})\|\leq \delta$, 
\item $\|\tilde{h}(\bar{n}_0)-(y+z_1+\ldots+z_{k-1})\|\leq \delta$, \item $\|y_i-z_i\|\geq b/2-\delta$, for all $i\in\{1,\ldots,k-1\}$, and
\item $\|y_1-z_1+\ldots+y_{k-1}-z_{k-1}\|\geq N_{k-1}\Big(\frac{b}{2},\ldots, \frac{b}{2}\Big)-\delta$,
\end{enumerate}
where $\delta>0$ is chosen so that $\delta<\eps/2$ and 
\[N_2\Big(N_{k-1}\Big(\frac{b}{2},\ldots, \frac{b}{2}\Big)-\delta,\frac{b}{2}-\delta\Big)-3\delta\geq N_2\Big(N_{k-1}\Big(\frac{b}{2},\ldots, \frac{b}{2}\Big),\frac{b}{2}\Big)-{\eps}.\]

Fix an infinite subset $\M\subset \N$ and let \[Y_1,\ldots,Y_{k-1}\in\text{cof}(Y),\ \  y_1,z_1,\ldots,y_{k-1},z_{k-1}\in Y,\] and $\bar{m}_0,\bar{n}_0\in[\M]^{k-1}$ be given as above. Fix $Y_k\in \text{cof}(Y)$. Since $\|h_k(\bar{m}_0,n)-h_k(\bar{m}_0,m)\|\geq b$ and $\|h_k(\bar{n}_0,n)-h_k(\bar{n}_0,m)\|\geq b$, for all $n\neq m$, we can assume that $\|h_k(\bar{m}_0,n)-\tilde{h}(\bar{m}_0)\|\geq b/2$ and $\|h_k(\bar{n}_0,n)-\tilde{h}(\bar{n}_0)\|\geq b/2$, for all  $n\in\M$ with $n>\bar{n}_0$.  In order to simplify notation, for each $n\in\N$, let \[u_n=h_k(\bar{m}_0,n)-\tilde{h}(\bar{m}_0)\ \ \text{and}\ \  v_n=h_k(\bar{n}_0,n)-\tilde{h}(\bar{n}_0).\] So, both $(u_n)_n$ and $(v_n)_n$ are weakly null. Hence, for  each $n\in\N$, we can pick $i(n)>n$ so that $\|u_n\|-\delta\leq \|u_n-v_{i(n)}\|$. So,  $\|u_n-v_{i(n)}\|\geq b/2-\delta$, for all $n\in\N$. 

Notice that $(v_{i(n)})_n$ is weakly null, hence, so is $(u_n-v_{i(n)})_n$.  Therefore, for a nonprincipal ultrafilter $\mathcal{U}$  on $\N$ and $h\in Y\setminus\{0\}$, it follows from the  definition of $\tilde{\delta}_Y$ that
\begin{align*}
\lim_{n,\mathcal{U}}\|h+(u_n-v_{i(n)})\|&\geq \|h\|\lim_{n,\mathcal{U}}\overline{\delta}_Y\Big(\frac{\|u_n-v_{i(n)}\|}{\|h\|}\Big)+\|h\|\\
&\geq \|h\|\lim_{n,\mathcal{U}} \tilde{\delta}_Y\Big(\frac{\|u_n-v_{i(n)}\|}{\|h\|}\Big)+\|h\|\\
&=\lim_{n,\mathcal{U}}N_2(\|h\|,\|u_n-v_{i(n)}\|).
\end{align*}
Hence, since $\|\sum_{j=1}^{k-1}(y_j-z_j)\|\geq  N_{k-1}(\frac{b}{2},\ldots,\frac{b}{2})-\delta$ and $\|u_n-v_{i(n)}\|\geq b/2-\delta$, for all $n\in\N$, by letting $h=\sum_{j=1}^{k-1}(y_j-z_j)$ we get $U\in\mathcal{U}$ so that  
\begin{equation}\label{Ineq}
\Big\|\sum_{j=1}^{k-1}(y_j-z_j)+(u_n-v_{i(n)})\Big\|\geq N_2\Big(N_{k-1}\Big(\frac{b}{2},\ldots,\frac{b}{2}\Big)-\delta,\frac{b}{2}-\delta\Big)-\delta,
\end{equation}
for all $n\in U$. As $\mathcal{U}$ is nonprincipal, $U$ is infinite. So,  $(u_n)_{n\in U}$ and $(v_{i(n)})_{n\in U}$
 are weakly null sequences. Therefore, we can pick $m_k\in U$, with $m_k>n_{k-1}$, and $y_k,z_k\in Y_k$ so that $\|u_{m_k}-y_k\|\leq \delta$ and $\|v_{i(m_k)}-z_k\|\leq \delta$. Hence, by Inequality \ref{Ineq} and by our choice of $\delta$, it follows that 
\[\Big\|\sum_{j=1}^{k-1}(y_j-z_j)+y_k-z_k\Big\|\geq N_{k}\Big(\frac{b}{2},\ldots,\frac{b}{2}\Big)-\eps.\]

At last, notice that
\begin{align*}
\Big\|h_k(\bar{m}_0,m_k&)-(y+y_1+\ldots+y_{k-1}+y_k)\Big\|\\
&\leq \Big\|\tilde{h}(\bar{m}_0)-(y+y_1+\ldots+y_{k-1})\Big\|+\Big\|h_k(\bar{m}_0, m_k)-\tilde{h}(\bar{m}_0)-y_k\Big\|\\
& \leq \delta+\delta\leq  \epsilon.
\end{align*}
Similarly, $\|h_k(\bar{n}_0, i(m_k))-(y+z_1+\ldots+z_k)\|\leq \eps$. Let $\bar{m}=\bar{m}_0\cup\{m_{k}\}$ and $\bar{n}=\bar{n}_0\cup\{i(m_{k})\}$. This finishes our induction. 
\end{proof}

\begin{proof}[Proof of Lemma \ref{lemmaGenNOVO}]
Fix $\eps>0$ and let   $y\in Y$ be given by applying Lemma \ref{lemmaGen} to $k$, $h_k$ and $\eps/3$. Hence, given an infinite subset  $\M\subset \N$, we can pick  $y_1,z_1,\ldots,y_k,z_k\in Y$ and $\bar{m}=(m_1,\ldots,m_k),\bar{n}=(n_1,\ldots,n_k)\in [\M]^k$, with $m_1<n_1<\ldots<m_k<n_k$, so that
\begin{align*}
\|h_k(\bar{m})-h_k(\bar{n})\|&\geq \|y_1-z_1+\ldots+ y_k- z_k\|\\
&\ \ \  \ -\|h(\bar{m})-(y+ y_1+\ldots+ y_k)\|\\
&\ \ \ \ -\|h(\bar{n})-(y+y_1+\ldots+ y_k)\|\\
&\geq N_k\Big(\frac{b}{2},\ldots,\frac{b}{2}\Big)-\eps.
\end{align*}
\end{proof}

Notice that the full strength of Lemma \ref{lemmaGen} has not been used in the proof of Lemma \ref{lemmaGenNOVO}. However, we make full use of Lemma \ref{lemmaGen} in Subsection \ref{SubsectionLowerEst} below.

We now have all the necessary tools needed to prove Theorem \ref{MainResult}.

\begin{proof}[Proof of Theorem \ref{MainResult}]
Let $f:X\to Y$ be a weakly sequentially continuous map which is coarse and satisfies Property $(*)$. Since $f$ satisfies Property $(*)$, we can pick $\beta>\alpha>0$ so that \[ \inf_{\|x-y\|\in[\alpha,\beta]}\|f(x)-f(y)\|>0.\] Let $(x_n)_n$ be a normalized weakly null sequence with spreading model $(e_n)_n$, so
\begin{align}\label{Eq1}
\|e_1+\ldots+e_k\|_S=\lim_{(n_1,\ldots,n_k)\to\infty}\|x_{n_1}+\ldots+x_{n_k}\|,
\end{align}
for all $k\in\N$. Without loss of generality, we can assume that $(x_n)_n$ is a $2$-basic sequence. Hence $\delta\coloneqq\inf_{n\neq m}\|x_n-x_m\|\geq 1/2$. So,  $(x_n)_n$ is $[\delta,2]$-separated (without loss of generality $\delta<2$). Let $\delta_0$ be a positive real number smaller than $\min\{2-\delta,\delta(\beta-\alpha)/\alpha\}$. Then, by standard Ramsey theory (see \cite{Todorcevic2010}, Theorem 1.3), we can assume that $(x_n)_n$ is $[a,a+\delta_0]$-separated, for some $a\in [\delta,2-\delta_0]$. Let $F(x)=f(\frac{\alpha}{a} x)$, for all $x\in X$. Then
\[\inf_{\|x-y\|\in[a,a+\delta_0]}\|F(x)-F(y)\|\geq b>0,\]
where  
\[b\coloneqq \inf_{\|x-y\|\in[\alpha,\beta]}\|f(x)-f(y)\|.\]
As $f$ is coarse, so is $F$. Let $L>0$ be such that 
\[\|F(x)-F(y)\|\leq L\|x-y\|\ \ \text{if}\ \ \|x-y\|\geq \frac{1}{2}.\] 
As $\delta\geq 1/2$, it is easy to see that $L$ does not depend on the sequence $(x_n)_n$.

Fix $k\in\N$ and pick a positive  $\eps<N_k(\frac{b}{4},\ldots,\frac{b}{4})$, where $N_k=N^{\tilde{\delta}_Y}_k$. Let $h_k:[\N]^k\to Y$ be given by $h_k(\bar{m})=F(x_{m_1}+\ldots+x_{m_k})$, for all $\bar{m}\in[\N]^k$.  By Lemma \ref{lemmaGenNOVO}, given any infinite subset  $\M\subset \N$, we can pick  $\bar{m}=(m_1,\ldots,m_k),\bar{n}=(n_1,\ldots,n_k)\in [\M]^k$, with $m_1<n_1<\ldots<m_k<n_k$, so that
\[\|h_k(\bar{m})-h_k(\bar{n})\|\geq N_k\Big(\frac{b}{2},\ldots,\frac{b}{2}\Big)-\eps\geq  N_k\Big(\frac{b}{4},\ldots,\frac{b}{4}\Big).\]
As $\M\subset\N$ is arbitrary, by standard Ramsey theory, there exists an infinite subset $\M\subset \N$ so that 
\[\|h_k(\bar{m})-h_k(\bar{n})\|\geq N_k\Big(\frac{b}{4},\ldots,\frac{b}{4}\Big),\]
for all $\bar{m}=(m_1,\ldots,m_k),\bar{n}=(n_1,\ldots,n_k)\in [\M]^k$ with $m_1<n_1<\ldots<m_k<n_k$.

On the other hand, as $(x_n)_n$ is $2$-basic, we have that $\|\sum_{j=1}^kx_{m_j}-\sum_{j=1}^kx_{m_j}\|\geq 1/2$, for all $\bar{m}\neq \bar{n}$. Therefore, we have that 
\[\|h_k(\bar{m})-h_k(\bar{n})\| \leq L\|x_{m_1}+\ldots+x_{m_k}-x_{n_1}-\ldots-x_{n_k}\|,\]
for all $\bar{m},\bar{n}\in[\M]^k$. Hence, for all $\bar{m}=(m_1,\ldots,m_k),\bar{n}=(n_1,\ldots,n_k)\in [\M]^k$ with $m_1<n_1<\ldots<m_k<n_k$, we have that
\[\frac{1}{L} N_k\Big(\frac{b}{4},\ldots,\frac{b}{4}\Big)\leq \|x_{m_1}-x_{n_1}+\ldots+x_{m_k}-x_{n_k}\| .\]
By Lemma \ref{Nkdes}, this gives us that 
\[ \frac{b}{4L}\|e_1+\ldots +e_k\|_{\tilde{\delta}_Y}\leq \|x_{m_1}-x_{n_1}+\ldots+x_{m_k}-x_{n_k}\|,\]
for all $\bar{m}=(m_1,\ldots,m_k),\bar{n}=(n_1,\ldots,n_k)\in [\M]^k$ with $m_1<n_1<\ldots<m_k<n_k$.

By Theorem \ref{Elton} and Equation \ref{Eq1}, we conclude that 
\[d\|e_1+\ldots +e_k\|_{\tilde{\delta}_Y}\leq \|e_1+\ldots+e_k\|_S,\]
for some $d>0$ depending only on $b$ and $L$, i.e., $d$ depends only on the map $f$, but it does not depend on either $k$ or on the sequence $(x_n)_n$. As $\tilde{\delta}_Y(t/2)\leq \overline{\delta}_Y(t)$, for all $t\geq 0$, we have that \[\frac{1}{2}\|e_1+\ldots +e_k\|_{\tilde{\delta}_Y}\leq \|e_1+\ldots +e_k\|_{\tilde{\delta}_Y},\] for all $k\in\N$.  This finishes the proof.
\end{proof}

\begin{proof}[Proof of Corollary \ref{CorMainResult}]
If $Y$ is $p$-AUC, then there exists $K>1$ so that $\overline{\delta}_Y(t)\geq Kt^p$, for all $t\in[0,1]$. Hence, as $\overline{\delta}_Y(t/2)\leq \tilde{\delta}_Y(t)$, we have that $\tilde{\delta}_Y(t)\geq (K/2^p)t^p$, for all $t\in[0,2]$. Using that $\tilde{\delta}_Y$ is increasing, we get  that \[\|e_1+\ldots+e_k\|_{\tilde{\delta}_Y}\geq \frac{K^{1/p}}{2}k^{1/p},\]  
for all $k\in\N$. So, by Theorem \ref{MainResult} (and Equation \ref{Ohoh}), it follows that $X$ has the $p$-co-Banach-Saks property.
\end{proof}

\begin{proof}[Proof of Corollary \ref{ellpdoesnotellq}]
It is easy to see that $\overline{\delta}_{\ell_r}(t)=(1+t^r)^{1/r}-1$, for all $r\in [1,\infty)$. Therefore, it easily follows  that $\ell_r$ is $r$-AUC, for all $r\in [1,\infty)$. However, $\ell_r$ does not have the  $s$-co-Banach-Saks property, for all $r\in (1,\infty)$ and all $s\in [1, r)$. So, the first claim follows from Corollary \ref{CorMainResult}. 

The last claim  follows from the fact that $\ell_q$ both coarsely and uniformly  embeds into $\ell_p$, for all $p,q\in [1,2]$ (see \cite{Nowak2006}, Theorem 5).
\end{proof}

\begin{remark}
We point out that, for $q>\max\{2,p\}$, $\ell_q$ does not coarsely (resp. uniformly) embed into $\ell_p$  (see \cite{MendelNaor2008}, Theorem 1.9 and Theorem 1.11).
\end{remark}

For $p\in [1,2]$, we know that $L_p$ strongly embeds into $\ell_p$ (see, for example, \cite{MendelNaor2004}, Remark 5.10, \cite{Nowak2006}, Theorem 5, and \cite{Randrianarivony2006}, page 1315). Since $L_p$ contains $\ell_2$, it follows that $L_p$ does not have the $p$-co-Banach-Saks property for any $p\in [1,2)$. Hence, Corollary \ref{CorMainResult} gives us that there is no  weakly sequentially continuous  coarse (resp. uniform) embedding of $L_p$ into $\ell_p$. Corollary \ref{CorL_p}(i) actually gives us a stronger result.

\begin{proof}[Proof of Corollary \ref{CorL_p}(i)]
By Corollary \ref{CorMainResult}, the existence of such embedding implies that $X$ has the $p$-co-Banach-Saks property. As a subspace of $L_p$ with the $p$-co-Banach-Saks property must be isomorphic to a subspace of $\ell_p$ (see \cite{Kalton2013}, Corollary 6.2), we are done.
\end{proof}

\begin{remark}
By Corollary \ref{CorMainResult}  above and Theorem 6.3 of \cite{Kalton2013}, we also have that if  $X$ is a subspace of a quotient of $L_p$ ($p\in (1,2)$) which admits either a coarse or a uniform embedding into $\ell_p$ by a  weakly sequentially continuous  map, then $X$ is a subspace of a quotient of $\ell_p$.
\end{remark}

In the case in which $X$ is an $\mathcal{L}_p$-space, we can get something stronger than Corollary \ref{CorL_p}(i).

\begin{proof}[Proof of Corollary \ref{CorL_p}(ii)]
This is a straightforward consequence of Corollary \ref{CorMainResult} and the fact that any $\mathcal{L}_p$-space is either isomorphic to $\ell_p$ or it contains an isomorphic copy of $\ell_2$ (see \cite{OdellJohnson1974}, Corollary 1).
\end{proof}

As mentioned in the introduction, to the best of our knowledge, it is not known whether an AUC Banach space must be $p$-AUC, for some $p\in[1,\infty)$. Therefore, Theorem \ref{c0spread2} cannot be seen as a consequence of Corollary \ref{CorMainResult}.

\begin{proof}[Proof of Theorem \ref{c0spread2}]
Let $(x_n)_n$ be a normalized weakly null sequence whose spreading model is isomorphic to the standard basis of $c_0$, call this spreading model $(e_n)_n$. Hence, there exists $M>0$ so that 
\[\|e_{1}+\ldots+e_{k}\|_S\leq M,\]
for all $k\in\N$. Then, by Theorem \ref{MainResult} (and Equation \ref{Ohoh}), there exists $c>0$ so that 
\[\|e_1+\ldots+e_k  \|_{\tilde{\delta}_Y}\leq \frac{1}{c} \|e_{1}+\ldots+e_{k}\|_S\leq \frac{M}{c},\]
for all $k\in\N$. As $\tilde{\delta}_Y$ is increasing, this gives us  that \[\sum_{i=1}^k\tilde{\delta}_Y\Big(\frac{c}{M}\Big)\leq\sum_{i=1}^k\tilde{\delta}_Y\Big(\frac{1}{\|e_1+\ldots+e_k  \|_{\tilde{\delta}_Y}}\Big)\leq 1,\]
for all $k\in\N$. So, $\tilde{\delta}_Y(c/M)=0$. Hence, $\tilde{\delta}_Y(c/(2M))=0$  and we conclude that $Y$ is not AUC. 

Let $(Y,\tn\cdot\tn)$ be a renorming of $(Y,\|\cdot\|)$ and let $\text{Id}:(Y,\|\cdot\|)\to (Y,\tn\cdot\tn)$ be the identity. As $\text{Id}\circ f$ is also a weakly sequentially continuous coarse map  satisfying Property $(*)$, this shows that $Y$ is not AUCable.
\end{proof}

\begin{problem}\label{problemAUC}
Let $X$ be an AUC space. Is $X$ $p$-AUCable, for some $p\in[1,\infty)$?
\end{problem}

Given a Banach space $X$,  the \emph{modulus of asymptotic uniform smoothness of $X$} is given by
\[\overline{\rho}_X(t)=\sup_{x\in\partial B_X}\inf_{E\in \text{cof}(X)}\sup_{y\in\partial B_E}\|x+ty\|-1.\]
$X$ is called \emph{asymptotically uniformly smooth} (\emph{AUS} for short) if $\lim_{t\to 0^+}\overline{\rho}_{X}(t)/t=0$. If $p\in(1,\infty)$, we say that $X$ is \emph{$p$-asymptotically uniformly smooth} (\emph{$p$-AUS} for short) if there exists $K>0$ so that $\overline{\rho}_X(t)\geq Kt^p$, for all $t\in [0,1]$. We define \emph{AUSable}  and \emph{$p$-AUSable} spaces analogously to AUCable and $p$-AUCable spaces. A Banach space $X$ is AUSable if and only if $X$ is $p$-AUSable for some $p>1$ (this was proved in \cite{KOS} for separable spaces, and later generalized for arbitrary Banach spaces in \cite{Raja2013}, Theorem 1,2). If $X$ is a separable reflexive Banach space, we have that $X$ is AUSable (resp. $p$-AUSable for some $p\in (1,\infty)$) if and only if $X^*$ is AUCable (resp. $p$-AUCable for some $p\in(1,\infty)$) (see \cite{OdellSchlumprecht2006}, Theorem 3). Therefore, Problem \ref{problemAUC} has a positive answer if $X$ is a separable reflexive space.

\begin{remark}
Notice that the map  $f$ in Theorem \ref{c0spread2} does not need to be defined in the whole $X$. Indeed, one can show that the same conclusion holds if $f$ is a bounded weakly sequentially continuous map defined on $M'\cdot B_X$, with $M'>M$, for which  there exists positive numbers $\alpha<\beta\leq 1$ so that $\inf_{\|x-y\|\in[\alpha,\beta]}\|f(x)-f(y)\|>0$.
\end{remark}

\subsection{Asymptotic structure}\label{SubsectionLowerEst}
It is clear from the proof of Theorem \ref{MainResult} that the full strength of Lemma \ref{lemmaGen} has not been used. In this subsection, we prove Theorem \ref{ThmAsymp} which fully uses Lemma \ref{lemmaGen}. In the case where $Y^*$ is separable,  this allows us to  obtain a result stronger than Corollary \ref{CorMainResult} above. For that, we need the machinery  of asymptotic games. 

Given a set $A$, let $A^{<\N}$ denote the set of finite tuples of elements of $A$. Let  $Y$ be a Banach space and $G\subset \partial B_Y^{<\N}$. For each $k\in\N$, we consider the following game $\mathcal{G}_k(Y,G)$ with $k$ rounds been played between two players, Player I and Player II:

\begin{align*}
&\text{Player 1 chooses }Y_1\in\text{cof}(Y).\\
&\text{Player 2 chooses }y_1\in \partial B_{Y_1}.\\
&\text{Player 1 chooses }Y_2\in\text{cof}(Y).\\
&\text{Player 2 chooses }y_2\in \partial B_{Y_2}.\\
&\vdots \\
&\text{Player 1 chooses }Y_k\in\text{cof}(Y).\\
&\text{Player 2 chooses }y_k\in \partial B_{Y_k}.\\
\end{align*}

For any $j\in\{1,\ldots,k\}$, we say that the tuple $(Y_1,y_1,\ldots,Y_j,y_j)$ is the \emph{outcome of the game after its $j$-th round}. We say that Player I wins the game if $(y_1,\ldots, y_k)\in G$. We say that Player I has a \emph{winning strategy  for $\mathcal{G}_k(Y,G)$} (WI-$\mathcal{G}_k(Y,G)$ for short) if Player I can force Player II to produce a tuple $(y_1,\ldots,y_k)$ in $G$. In other words, WI-$\mathcal{G}_k(Y,G)$ if
\[\exists Y_1\in\text{cof}(Y),\ \forall y_1\in \partial B_{Y_1},\ \ldots,\ \exists Y_n\in\text{cof}(Y),\ \forall y_k\in \partial B_{Y_k},\]
so that $(y_1,\ldots,y_k)\in G$.

\begin{defi}\label{DefinitionALE}
Let $p\in [1,\infty)$ and $Y$ be a Banach space. For $C>0$, we define   $G_{p,C}\subset \partial B_Y^{<\N}$  by setting
\[(y_1,\ldots,y_k)\in G_{p,C}\ \Leftrightarrow\ \Big\|\sum_{j=1}^ka_jy_j\Big\|\geq C\Big(\sum_{j=1}^k|a_j|^p\Big)^{1/p}, \ \ \text{for all} \ \ a_1,\ldots,a_k\in\R.\]
The Banach space $Y$ is said to satisfy \emph{asymptotic lower $\ell_p$-estimates} if there exists $C>0$ such that WI-$\mathcal{G}_k(Y,G_{p,C})$, for all $k\in\N$. 
\end{defi}

Let $p\in [1,\infty)$. In the class of spaces with separable dual, every   $p$-AUC space satisfies asymptotic lower $\ell_p$-estimates (see \cite{OdellSchlumprecht2002}, Proposition 5(a), and \cite{OdellSchlumprecht2006}, Proposition 2.3(e)). Therefore, for the class of spaces $Y$ with separable dual, the following result is a strengthening  of Corollary \ref{CorMainResult}.

\begin{thm}\label{ThmAsymp}
Let $X$ and $Y$ be  Banach spaces, and assume that $Y$  satisfies asymptotic lower $\ell_p$-estimates, for some $p\in [1,\infty)$.  Assume that there exists a   weakly sequentially continuous   map $f:X\to Y$   which is coarse and satisfies Property $(*)$. Then $X$ has the $p$-co-Banach-Saks property.
\end{thm}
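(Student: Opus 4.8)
The plan is to mimic the proof of Theorem \ref{MainResult}, replacing the use of the norms $N^{\tilde\delta_Y}_k$ (and the estimate of Lemma \ref{Nkdes}) by the full strength of Lemma \ref{lemmaGen} together with the winning strategy coming from asymptotic lower $\ell_p$-estimates. Fix $C>0$ such that WI-$\mathcal{G}_k(Y,G_{p,C})$ for all $k\in\N$. Given a normalized weakly null sequence $(x_n)_n$ in $X$ with spreading model $(e_n)_n$, I would first pass, exactly as in the proof of Theorem \ref{MainResult}, to a $2$-basic subsequence, rescale $f$ to a map $F$ so that $(x_n)_n$ becomes $[a,a+\delta_0]$-separated with $\inf_{\|x-y\|\in[a,a+\delta_0]}\|F(x)-F(y)\|\geq b>0$, and fix $L>0$ with $\|F(x)-F(y)\|\leq L\|x-y\|$ whenever $\|x-y\|\geq 1/2$. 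The point is that the separation is an interval, so standard Ramsey theory still applies, and $L$, $b$ do not depend on $k$ or on the sequence.

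Now fix $k\in\N$ and $\eps>0$ small. Apply Lemma \ref{lemmaGen} to $F$, $k$, $\eps$ and the $[a,a+\delta_0]$-separated weakly null sequence $(x_n)_n$ to obtain $y\in Y$ with the stated quantifier alternation. The key new idea is to feed Player I's winning strategy for $\mathcal{G}_k(Y,G_{p,C})$ into the ``$\forall Y_i\in\text{cof}(Y)$'' slots of Lemma \ref{lemmaGen}: whenever Lemma \ref{lemmaGen} demands a finite-codimensional subspace $Y_i$, I let Player I's strategy (applied to the normalized vectors $(y_j-z_j)/\|y_j-z_j\|$ produced so far) dictate the choice $Y_i$; Lemma \ref{lemmaGen} then returns $y_i,z_i\in Y_i$ with $\|y_i-z_i\|\geq b/2-\eps$, and I play $(y_i-z_i)/\|y_i-z_i\|$ as Player II's move. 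After $k$ rounds, the winning condition for Player I gives
\[
\Big\|\sum_{j=1}^k a_j\,\frac{y_j-z_j}{\|y_j-z_j\|}\Big\|\geq C\Big(\sum_{j=1}^k|a_j|^p\Big)^{1/p}
\]
for all scalars; choosing $a_j=\|y_j-z_j\|\geq b/2-\eps$ yields
\[
\Big\|\sum_{j=1}^k (y_j-z_j)\Big\|\geq C\Big(\tfrac{b}{2}-\eps\Big)k^{1/p}.
\]
Combining this with (i) and (ii) of Lemma \ref{lemmaGen} (triangle inequality, absorbing $2\eps$), I get $\bar m,\bar n\in[\N]^k$ with $m_1<n_1<\dots<m_k<n_k$ and $\|h_k(\bar m)-h_k(\bar n)\|\geq C(b/2-\eps)k^{1/p}-2\eps$, hence, for $\eps$ small, $\|h_k(\bar m)-h_k(\bar n)\|\geq (Cb/4)k^{1/p}$ say.

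From here the argument is identical to the end of the proof of Theorem \ref{MainResult}: this lower bound holds for all infinite $\M\subset\N$, so by Ramsey theory we may fix one $\M$ on which it holds for all admissible $\bar m,\bar n\in[\M]^k$; since $(x_n)_n$ is $2$-basic, $\|x_{m_1}+\dots+x_{m_k}-x_{n_1}-\dots-x_{n_k}\|\geq 1/2$, so the Lipschitz-for-large-distances estimate gives $\|x_{m_1}-x_{n_1}+\dots+x_{m_k}-x_{n_k}\|\geq \frac{1}{L}\cdot\frac{Cb}{4}k^{1/p}$; then Elton's theorem (Theorem \ref{Elton}) applied to the $1$-unconditional-up-to-$D$ subsequence removes the alternating signs at the cost of a factor $D$, and Equation \ref{Eq1} (the spreading model limit) yields $\|e_1+\dots+e_k\|_S\geq \frac{Cb}{4LD}k^{1/p}$, with the constant independent of $k$ and of the sequence. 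This gives the $p$-co-Banach-Saks property.

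I expect the only genuine obstacle to be the bookkeeping in the interleaving of the two ``games'': Lemma \ref{lemmaGen} is itself phrased as a quantifier alternation $\forall Y_i\ \exists y_i,z_i$, and one must check that normalizing $y_i-z_i$ (which is legitimate since $\|y_i-z_i\|\geq b/2-\eps>0$) produces a legal move for Player II in $\partial B_{Y_i}$, and that Player I's strategy only ever needs the previously chosen unit vectors, so that the two inductions can be run in lockstep. A minor point is that $G_{p,C}$ in Definition \ref{DefinitionALE} is stated for unit vectors, which is exactly why the normalization step is needed; once that is in place, plugging $a_j=\|y_j-z_j\|$ back in is immediate. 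Everything else is a routine transcription of the proof of Theorem \ref{MainResult}.
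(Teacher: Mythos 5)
Your proposal is correct and follows essentially the same route as the paper's proof: the paper likewise feeds Player I's winning strategy for $\mathcal{G}_k(Y,G_{p,C})$ into the successive $\forall Y_i$ slots of Lemma \ref{lemmaGen}, plays the normalized differences $(y_i-z_i)/\|y_i-z_i\|$ as Player II's moves, and then concludes with the same triangle-inequality, Ramsey, Lipschitz-for-large-distances, and Elton steps. The interleaving bookkeeping you flag is handled in the paper exactly as you describe, so there is nothing to add.
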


\begin{proof}	
As $Y$ satisfies asymptotic lower $\ell_p$-estimates, we can pick $C>1$ so that $\text{WI-}\mathcal{G}_k(Y,G_{p,C})$, for all $k\in\N$. Let $(x_n)_n$ be a normalized weakly null sequence in $X$. Without loss of generality, we can assume that $(x_n)_n$ is a $2$-basic sequence. Hence $\delta\coloneqq\inf_{n\neq m}\|x_n-x_m\|\geq 1/2$. So,  $(x_n)_n$ is $[\delta,2]$-separated. Dilating  the argument in $f$ if necessary and proceeding just as in the proof of Theorem \ref{MainResult},  we can assume that  
\[b\coloneqq \inf_{\|x-y\|\in[\delta,2]}\|f(x)-f(y)\|>0.\]

Fix $k\in\N$, and let $h_k:[\N]^k\to Y$ be given by $h(\bar{m})=f(x_{m_1}+\ldots+x_{m_k})$, for all $\bar{m}\in[\N]^k$. Let   $y\in Y$ be given by applying Lemma \ref{lemmaGen} to $k$, $h_k$ and $\eps=bC/16$. Fix an infinite subset $\M\subset \N$. We now pick $Y_1,\ldots,Y_{k}\in\text{cof}(Y)$, $y_1,z_1,\ldots,y_{k},z_{k}\in Y$ and $\bar{m},\bar{n}\in[\M]^k$ as follows: let $Y_1\in \text{cof}(Y)$ be the space chosen by Player I's winning strategy in the first round of the game $\mathcal{G}_{k}(Y,G_{p,C})$ described above. Considering $\M\subset \N$ and given that $Y_1$ has already been  picked, we use Lemma \ref{lemmaGen}  to pick $y_1,z_1\in Y_1$. Let $j\in\{1,\ldots, k-1\}$, and assume that $Y_i\in\text{cof}(Y)$ and $y_i,z_i\in Y_i$ have been picked for all $i\leq j$. Let $Y_{j+1}\in\text{cof}(Y)$ be the space picked by Player I's winning strategy in the $(j+1)$-th round of the game $\mathcal{G}_{k}(Y,G_{p,C})$, considering that \[\Big(Y_1,\frac{y_1-z_1}{\|y_1-z_1\|},\ldots,Y_j,\frac{y_j-z_j}{\|y_j-z_j\|}\Big)\] is the outcome of the game $\mathcal{G}_{k}(Y,G_{p,C})$ after its $j$-th round is completed. We pick $y_{j+1},z_{j+1}\in Y_{j+1}$ using Lemma \ref{lemmaGen} applied to $\M$ and the tuple \[(Y_1,y_1,z_1,\ldots,Y_j,y_j,z_j,Y_{j+1}).\] Notice that, once $y_k$ and $z_k$ are picked, Lemma \ref{lemmaGen} also produces $\bar{m},\bar{n}\in[\M]^k$ with $m_1<n_1<\ldots<m_k<n_k$.  This completes the definition of $Y_1,\ldots,Y_{k}\in\text{cof}(Y)$,  $y_1,\ldots,y_{k}\in Y$, and $\bar{m},\bar{n}\in[\M]^k$. 

By our choice of $Y_1,\ldots,Y_{k}\in\text{cof}(Y)$, $y_1,z_1,\ldots,y_{k},z_{k}\in Y$ and $\bar{m},\bar{n}\in[\M]^k$, we have the following:
\begin{enumerate}[(i)]
\item $\|h(\bar{m})-(y+y_1+\ldots+y_k)\|\leq \eps$,
\item $\|h(\bar{n})-(y+z_{1}+\ldots+z_{k})\|\leq \eps$,
\item $\|y_i-z_i\|\geq  b/4$, for all $i\in\{1,\ldots, k\}$, and 
\item $\|y_1-z_1+\ldots+y_k-z_k\|\geq C(\sum_{i=1}^{k}\|y_i-z_i\|^p)^{1/p}$.
\end{enumerate}

Therefore, we have that
\begin{align*}
\|h(\bar{m})-h(\bar{n})\|&\geq \|y_1-z_1+\ldots+ y_k- z_k\|\\
&\ \ \  \ -\|h(\bar{m})-(y+ y_1+\ldots+ y_k)\|\\
&\ \ \ \ -\|h(\bar{n})-(y+z_1+\ldots+ z_k)\|\\
&\geq \frac{bC}{4}k^{1/p} -2\eps\\
&\geq \frac{bC}{8}k^{1/p}.
\end{align*}
As $\M\subset\N$ was arbitrary, this gives us that, for every infinite subset $\M\subset \N$, there exists $\bar{m},\bar{n}\in[\M]^k$, with $m_1<n_1<\ldots<m_k<n_k$, so that 
\[\|h(\bar{m})-h(\bar{n})\|\geq \frac{b C}{8}k^{1/p} .\]
Hence, by Ramsey theory, there exists an infinite $\M\subset \N$ so that 
\[\|h(\bar{m})-h(\bar{n})\|\geq \frac{bC}{8}k^{1/p} ,\]
for all $\bar{m},\bar{n}\in [\M]^k$ with $m_1<n_1<\ldots<m_k<n_k$. The proof now finishes just as the proof of Theorem \ref{MainResult}.
\end{proof}

\subsection{Weak$^*$ asymptotic uniform convexity.}\label{SubsectionWeakStar}

Let $X$ be a dual Banach space. We define the \emph{modulus of weak$^*$ asymptotic uniform convexity of $X$} by letting, for each $t\geq 0$,
\[\overline{\delta}^*_X(t)=\inf_{x\in\partial B_X}\sup_{E\in \text{cof}^*(X)}\inf_{y\in\partial B_E}\|x+ty\|-1,\]
where $\text{cof}^*(X)$ denotes the set of weak$^*$ closed finite codimensional subspaces of $X$.  We say that $X$ is \emph{weak$^*$ asymptotically uniformly convex} if $\overline{\delta}^*_X(t)>0$, for all $t>0$. For $p\in [1,\infty)$, we say that $X$ is \emph{$p$-weak$^*$-asymptotically uniformly convex} (\emph{$p$-weak$^*$-AUC} for short) if there exists $K>0$ so that $\overline{\delta}^*_X(t)\geq Kt^p$, for all $t\in[0,1]$. 

Similarly as with $\overline{\delta}_X$, it is easy to see that $\overline{\delta}^*_X$ is $1$-Lipschitz, and that for every weak$^*$ null sequence $(x_{n})_{n\in\N}$ in $X$, every nonprincipal ultrafilter $\mathcal{U}$ on $\N$ and every $x\in X\setminus \{0\}$, we have that
\[\|x\|\lim_{n,\mathcal{U}}\overline{\delta}^*_X\Big(\frac{\|x_\lambda\|}{\|x\|}\Big)\leq \lim_{n,\mathcal{U}}\|x+x_\lambda\|-\|x\|.\]
Also, as $\overline{\delta}^*_X(t)/t$ is increasing, we have that 
\[\tilde{\delta}^*_X(t)=\int_0^t\frac{\overline{\delta}^*_X(s)}{s}ds\]
is a convex function. Since, $\overline{\delta}^*_X(t/2)\leq \tilde{\delta}^*_X(t)\leq \overline{\delta}^*_X(t)$, for all $t\geq 0$, the functions $\overline{\delta}^*_X$ and $\tilde{\delta}^*_X$ are equivalent. 

Let $Y$ be a Banach space. Since $\tilde{\delta}_Y^*$ is a Lipschitz Orlicz function, we can construct a sequence of absolute norms $(N^*_k)_k$ by letting $N^*_k=N^{\tilde{\delta}^*_Y}_k$, for all $k\in\N$. If $X$ and $Y$ are dual spaces and $f:X\to Y$ is  weak$^*$ sequentially continuous,  Lemma \ref{lemmaGenNOVO} remains true with $(N^*_k)_k$ replacing $(N_k)_k$ in its conclusion. Also, under the same hypothesis, Lemma \ref{lemmaGen} remains true if we also replace   $\text{cof}(Y)$ by $\text{cof}^*(Y)$. Therefore, we obtain the following analogs of Theorem \ref{MainResult} and Theorem \ref{ThmAsymp}.

\begin{thm}\label{MainResultStarVersion}
Let $X$ and  $Y$ be a dual Banach spaces. Assume that $X$ maps into $Y$ by a weak$^*$ sequentially continuous map which is coarse and satisfies Property $(*)$.  Then there exists $c>0$, so that if $(e_n)_n$ is the spreading model in a Banach space $(S,\|\cdot\|_S)$ of a normalized weak$^*$ null sequence in $X$, then
\[c\|e_1+\ldots+e_k  \|_{\tilde{\delta}^*_Y}\leq\|e_1+\ldots+e_k\|_S,\]
for all $k\in\N$. 
\end{thm}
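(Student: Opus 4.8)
The plan is to mimic, line by line, the proof of Theorem \ref{MainResult}, replacing every occurrence of the weak topology by the weak$^*$ topology and every occurrence of $\overline{\delta}_Y$, $\tilde{\delta}_Y$, $N_k$ by their weak$^*$ counterparts $\overline{\delta}^*_Y$, $\tilde{\delta}^*_Y$, $N^*_k$. First I would observe that all the preparatory facts used in the proof of Theorem \ref{MainResult} have weak$^*$ analogs: $\tilde{\delta}^*_Y$ is a Lipschitz Orlicz function (since $\overline{\delta}^*_Y$ is $1$-Lipschitz and $\overline{\delta}^*_Y(t)/t$ is increasing), so the norms $N^*_k=N^{\tilde{\delta}^*_Y}_k$ are well defined and Lemma \ref{Nkdes} applies to them, giving $N^*_k(1,\ldots,1)\geq\|e_1+\ldots+e_k\|_{\tilde{\delta}^*_Y}$; and the basic inequality $\|x\|\lim_{n,\mathcal{U}}\overline{\delta}^*_X(\|x_\lambda\|/\|x\|)\leq\lim_{n,\mathcal{U}}\|x+x_\lambda\|-\|x\|$ holds for weak$^*$ null sequences, which is exactly the estimate that drives the induction in Lemma \ref{lemmaGen}. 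As noted in the paragraph preceding the statement, Lemma \ref{lemmaGen} and Lemma \ref{lemmaGenNOVO} remain valid with $\text{cof}^*(Y)$ in place of $\text{cof}(Y)$ and $(N^*_k)_k$ in place of $(N_k)_k$, provided $f$ is weak$^*$ sequentially continuous; I would take these weak$^*$ versions as given.

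Next I would run the argument of Theorem \ref{MainResult} verbatim. Let $f:X\to Y$ be weak$^*$ sequentially continuous, coarse, and satisfy Property $(*)$, and choose $\beta>\alpha>0$ with $\inf_{\|x-y\|\in[\alpha,\beta]}\|f(x)-f(y)\|>0$. Let $(x_n)_n$ be a normalized weak$^*$ null sequence in $X$ with spreading model $(e_n)_n$. Passing to a subsequence I may assume $(x_n)_n$ is $2$-basic, hence $[\delta,2]$-separated with $\delta\geq 1/2$; by Ramsey theory I may assume it is $[a,a+\delta_0]$-separated for suitable small $\delta_0$ and $a\in[\delta,2-\delta_0]$, and after dilating the argument of $f$ by the factor $\alpha/a$ I get a map $F$ with $\inf_{\|x-y\|\in[a,a+\delta_0]}\|F(x)-F(y)\|\geq b>0$ and $\|F(x)-F(y)\|\leq L\|x-y\|$ whenever $\|x-y\|\geq 1/2$, with $L$ independent of the chosen sequence. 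Applying the weak$^*$ version of Lemma \ref{lemmaGenNOVO} to $h_k(\bar m)=F(x_{m_1}+\ldots+x_{m_k})$ and then Ramsey theory, I obtain an infinite $\M$ with $\|h_k(\bar m)-h_k(\bar n)\|\geq N^*_k(b/4,\ldots,b/4)$ for all interleaved $\bar m,\bar n\in[\M]^k$. Combining with the Lipschitz-from-above estimate on $F$ and with Lemma \ref{Nkdes} (weak$^*$ version) gives $\tfrac{b}{4L}\|e_1+\ldots+e_k\|_{\tilde{\delta}^*_Y}\leq\|x_{m_1}-x_{n_1}+\ldots+x_{m_k}-x_{n_k}\|$; Elton's theorem (Theorem \ref{Elton}) and the defining property of the spreading model then yield $c\|e_1+\ldots+e_k\|_{\tilde{\delta}^*_Y}\leq\|e_1+\ldots+e_k\|_S$ with $c>0$ depending only on $f$.

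One point I would flag: Elton's Near Unconditionality Theorem as stated (Theorem \ref{Elton}) is for normalized \emph{weakly} null sequences, whereas here the vectors $x_{m_j}-x_{n_j}$ arise from a weak$^*$ null sequence. The resolution is the same as in the original proof — one only needs near-unconditionality of the differences $(x_{m_j}-x_{n_j})_j$ up to a universal constant, and since this is a statement about a sequence in the Banach space $X$ (or one may simply quote the version of Elton's theorem valid for sequences that are basic, which these differences are, after passing to a further subsequence), the argument goes through; I would make this explicit rather than leave it implicit. The main obstacle is really just bookkeeping: making sure that every invocation of weak sequential continuity in the chain Lemma \ref{lemmaGen} $\Rightarrow$ Lemma \ref{lemmaGenNOVO} $\Rightarrow$ Theorem \ref{MainResult} is of the form "a weak$^*$ null perturbation can be absorbed into a weak$^*$ closed finite-codimensional subspace," which is exactly what $\text{cof}^*(Y)$ is designed to supply. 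No new idea beyond Theorem \ref{MainResult} is needed.
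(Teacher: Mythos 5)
Your proposal follows exactly the route the paper itself takes: the paper offers no separate proof of Theorem \ref{MainResultStarVersion}, only the remark immediately preceding it that Lemmas \ref{lemmaGen} and \ref{lemmaGenNOVO} remain valid with $\text{cof}^*(Y)$ and $(N^*_k)_k$ in place of $\text{cof}(Y)$ and $(N_k)_k$, after which the proof of Theorem \ref{MainResult} is to be run verbatim. In substance your argument and the paper's are the same.

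The one point to correct is your handling of the Elton step, which you rightly single out as the only place where ``weak'' cannot be mechanically replaced by ``weak$^*$''. There is no version of Theorem \ref{Elton} ``valid for sequences that are basic'': basic sequences can be arbitrarily badly conditional, and near-unconditionality genuinely uses weak nullity. The standard repair is Rosenthal's $\ell_1$-dichotomy applied to the normalized weak$^*$ null sequence $(x_n)_n$: either it has a weakly convergent subsequence, whose weak limit must be $0$ (weak convergence implies weak$^*$ convergence and weak$^*$ limits are unique), so Elton's theorem applies to that subsequence; or it has a subsequence equivalent to the unit vector basis of $\ell_1$, in which case $\|e_1+\ldots+e_k\|_S$ grows linearly in $k$ while $\|e_1+\ldots+e_k\|_{\tilde{\delta}^*_Y}\leq k\,\|e_1\|_{\tilde{\delta}^*_Y}$, so the desired inequality holds for that sequence directly, with a constant depending on its $\ell_1$-equivalence constant. (Whether $c$ can then be taken uniform over all such $\ell_1$-type sequences is a point the paper itself leaves unaddressed, so this caveat attaches to the statement as much as to your proof.) With the Elton step repaired in this way, the rest of your bookkeeping is exactly what the paper intends.
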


Given a Banach space $Y$, a set $G\subset \partial B_Y^{<\N}$ and $k\in\N$, we define the game $\mathcal{G}_k^*(Y,G)$ analogously to the game $\mathcal{G}_k(Y,G)$ but replacing $\text{cof}(Y)$ with $\text{cof}^*(Y)$ in its definition. Similarly, we say that Player I has a \emph{winning strategy for $\mathcal{G}_k^*(Y,G)$} (\text{WI-}$\mathcal{G}^*_k(Y,G)$ for short) if Player I can force Player II to produce a sequence $(y_1,\ldots,y_k)$ in $G$. We say that $Y$ satisfies \emph{weak$^*$ asymptotic lower $\ell_p$-estimates} if there exists $C>0$ so that \text{WI-}$\mathcal{G}^*_k(Y,G_{p,C})$, for all $k\in\N$, where $G_{p,C}$ is defined as in Definition \ref{DefinitionALE}. 

\begin{thm}\label{ThmAsympWeak}
Let $X$ and $Y$ be  dual Banach spaces, and assume that $Y$  satisfies weak$^*$ asymptotic lower $\ell_p$-estimates, for some $p\in [1,\infty)$.  Assume that there exists a   weak$^*$ sequentially continuous   map $f:X\to Y$   which is coarse and satisfies Property $(*)$. Then $X$ has the $p$-co-Banach-Saks property.
\end{thm}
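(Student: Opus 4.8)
The plan is to run exactly the same argument as in the proof of Theorem \ref{ThmAsymp}, observing at each step that the weak$^*$ versions of the auxiliary results are available by hypothesis. First I would fix a normalized weak$^*$ null sequence $(x_n)_n$ in $X$; as $(x_n)_n$ is bounded and has no weak$^*$-Cauchy subsequence (any such subsequence would converge in norm, contradicting normalization after passing to differences — actually one should be slightly careful here, so I would instead directly pass to a subsequence which is $2$-basic, hence $[\delta,2]$-separated with $\delta=\inf_{n\neq m}\|x_n-x_m\|\geq 1/2$, and let $(e_n)_n$ denote its spreading model in some $(S,\|\cdot\|_S)$). By dilating the argument of $f$ as in the proof of Theorem \ref{MainResult}, I can arrange that $b\coloneqq\inf_{\|x-y\|\in[\delta,2]}\|f(x)-f(y)\|>0$; since $f$ is coarse and $X$ is metrically convex, there is $L>0$ with $\|f(x)-f(y)\|\leq L\|x-y\|$ whenever $\|x-y\|\geq 1/2$, and $L$ does not depend on the chosen sequence.

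Next I would invoke the weak$^*$ analog of Lemma \ref{lemmaGen} — which, as noted in Subsection \ref{SubsectionWeakStar}, holds verbatim with $\text{cof}(Y)$ replaced by $\text{cof}^*(Y)$ and $(N_k)_k$ replaced by $(N^*_k)_k=(N^{\tilde{\delta}^*_Y}_k)_k$, using that $\overline{\delta}^*_Y$ satisfies the same lower estimate $\|x\|\lim_{n,\mathcal{U}}\overline{\delta}^*_Y(\|x_\lambda\|/\|x\|)\leq\lim_{n,\mathcal{U}}\|x+x_\lambda\|-\|x\|$ along weak$^*$ null sequences. Fixing $k\in\N$, setting $h_k(\bar m)=f(x_{m_1}+\ldots+x_{m_k})$ and $\eps=bC/16$, I would interleave the winning strategy of Player I in the game $\mathcal{G}^*_k(Y,G_{p,C})$ (available since $Y$ satisfies weak$^*$ asymptotic lower $\ell_p$-estimates, with $C>1$) with the selections produced by the weak$^*$ Lemma \ref{lemmaGen}: at round $j+1$, Player I's strategy picks $Y_{j+1}\in\text{cof}^*(Y)$ given the normalized outcome $((y_i-z_i)/\|y_i-z_i\|)_{i\leq j}$, then Lemma \ref{lemmaGen} picks $y_{j+1},z_{j+1}\in Y_{j+1}$ (note the $Y_{j+1}$ produced this way lie in $\text{cof}^*(Y)$, so this is consistent with the game). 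After $k$ rounds Lemma \ref{lemmaGen} outputs $\bar m,\bar n\in[\M]^k$ with $m_1<n_1<\ldots<m_k<n_k$ satisfying properties (i)–(iv), where (iv) now reads $\|y_1-z_1+\ldots+y_k-z_k\|\geq C(\sum_{i=1}^k\|y_i-z_i\|^p)^{1/p}$ because the normalized differences form a tuple in $G_{p,C}$.

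From (i)–(iv) and (iii)'s bound $\|y_i-z_i\|\geq b/4$ I would get $\|h_k(\bar m)-h_k(\bar n)\|\geq \tfrac{bC}{4}k^{1/p}-2\eps\geq\tfrac{bC}{8}k^{1/p}$; since $\M$ was arbitrary, Ramsey theory yields one infinite $\M\subset\N$ on which this holds for all $\bar m,\bar n\in[\M]^k$ with $m_1<n_1<\ldots<m_k<n_k$. On the other hand $2$-basicity gives $\|\sum x_{m_j}-\sum x_{n_j}\|\geq 1/2$, so the Lipschitz-for-large-distances bound gives $\|h_k(\bar m)-h_k(\bar n)\|\leq L\|\sum x_{m_j}-\sum x_{n_j}\|$; combining, $\tfrac{bC}{8L}k^{1/p}\leq\|x_{m_1}-x_{n_1}+\ldots+x_{m_k}-x_{n_k}\|$ for all such $\bar m,\bar n$. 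Applying the weak$^*$ analog of Elton's theorem (Theorem \ref{Elton}; a normalized weak$^*$ null sequence has a subsequence that is $D$-unconditional — this holds since Elton's theorem, or rather the relevant near-unconditionality, can be run for weak$^*$ null sequences, or one can simply use that a $2$-basic sequence already suffices together with the spreading-model limit) and the definition of the spreading model (Equation analogous to \ref{Eq1} for weak$^*$ null sequences), I conclude $\|e_1+\ldots+e_k\|_S\geq d k^{1/p}$ for some $d>0$ depending only on $b,L,C,D$ — i.e. only on $f$ and $Y$ — which is exactly the $p$-co-Banach-Saks property. The main obstacle I anticipate is bookkeeping: making sure the $\text{cof}^*(Y)$ subspaces returned by the game-strategy and those returned by the weak$^*$ Lemma \ref{lemmaGen} are mutually compatible, and checking the weak$^*$ version of Elton's lemma (or circumventing it via $2$-basicity) — but none of these should present genuine difficulty since Subsection \ref{SubsectionWeakStar} already records that the two key lemmas go through unchanged in the weak$^*$ setting.
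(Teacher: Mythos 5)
Your proposal is correct and follows exactly the route the paper intends: the paper gives no separate proof of Theorem \ref{ThmAsympWeak}, merely recording in Subsection \ref{SubsectionWeakStar} that Lemma \ref{lemmaGen} goes through with $\text{cof}(Y)$ replaced by $\text{cof}^*(Y)$ for weak$^*$ sequentially continuous maps, and your interleaving of Player I's strategy for $\mathcal{G}^*_k(Y,G_{p,C})$ with the weak$^*$ Lemma \ref{lemmaGen} is precisely the proof of Theorem \ref{ThmAsymp} transplanted to that setting. One small correction to the obstacle you flag at the end: the $p$-co-Banach-Saks property quantifies only over normalized \emph{weakly} null sequences, and every such sequence is weak$^*$ null in a dual space, so you should start from a weakly null sequence (to which Elton's theorem applies verbatim) rather than an arbitrary weak$^*$ null one; the suggested circumvention via $2$-basicity alone would not suffice, since a $2$-basic sequence need not be nearly unconditional.
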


It is clear that all the other results in this paper for weakly sequentially continuous coarse maps $X\to Y$ satisfying Property $(*)$ have analog versions for weak$^*$ sequentially continuous maps if the Banach spaces $X$ and $Y$ are dual spaces.

\section{Examples of  weakly sequentially  continuous  nonlinear embeddings.}\label{SectionExample}

In this section, we give a sufficient condition for a coarse map $X\to Y$ to be weakly sequentially continuous (under some assumptions on $Y$). We use this to show that there are Banach spaces $X$ and $Y$  so that $X$ strongly embeds into $Y$ by a map which is also weakly sequentially continuous, but $X$ does not embed into $Y$ isomorphically. 

Let $(X_n,\|\cdot\|_n)_n$ be a sequence of Banach spaces and let $\mathcal{E}=(e_n)_n$ be a $1$-unconditional basic sequence in a Banach space $(E,\|\cdot\|_E)$. We define the \emph{$\mathcal{E}$-sum of $(X_n,\|\cdot\|_n)_n$}, which we call $(\oplus _n X_n)_{\mathcal{E}}$, to be the space of sequences $(x_n)_n$, where $x_n\in X_n$, for all $n\in\N$, so that 
\[\|(x_n)_n\|\vcentcolon =\Big\|\sum_{n\in\N}\|x_n\|_ne_n\Big\|_E<\infty.\]
 One can check that $(\oplus _n X_n)_\mathcal{E}$ endowed with the norm $\|\cdot\|$ defined above is a Banach space. 

\begin{lemma}\label{lemmaImpliesWeakGEN}
Let $X$ be a Banach space, $(Y_n)_n$ be a sequence of Banach spaces and $\mathcal{E}$ be a $1$-unconditional shrinking basic sequence. For each $n\in\N$, let $f_n:X\to Y_n$ be a continuous  map satisfying the following property: there exists a finite rank operator $P_n:X\to X $ so that $f_n(x)=f_n(P_n(x))$, for all $x\in X$. Assume that the map $f=(f_n):X\to (\oplus_n Y_n)_{\mathcal{E}}$ given by $f(x)=(f_n(x))_n$, for all $x\in X$, is well defined and coarse. Then $f$ is weakly sequentially continuous. 
\end{lemma}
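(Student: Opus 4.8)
The plan is to show directly that if $x_j \to x$ weakly in $X$, then $f(x_j) \to f(x)$ weakly in $Y := (\oplus_n Y_n)_{\mathcal{E}}$. Since $\mathcal{E}$ is shrinking, the dual $Y^*$ can be identified with $(\oplus_n Y_n^*)_{\mathcal{E}^*}$, where $\mathcal{E}^*$ is the (boundedly complete) sequence of biorthogonal functionals. Hence a bounded sequence $(z_j)$ in $Y$ converges weakly to $z \in Y$ if and only if it is norm-bounded and converges coordinatewise in the sense that for each fixed $n$, the $Y_n$-component of $z_j$ converges weakly in $Y_n$ to the $Y_n$-component of $z$. So I would first record this characterization of weak convergence in an $\mathcal{E}$-sum for shrinking $\mathcal{E}$.

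Next, I would verify the two ingredients of that characterization for the sequence $f(x_j)$. Norm-boundedness is immediate: $(x_j)$ is weakly convergent, hence norm-bounded, say $\|x_j\| \le R$; since $f$ is coarse, $\omega_f(2R) < \infty$, so $\|f(x_j) - f(x_{j'})\| \le \omega_f(2R)$ and $(f(x_j))$ is bounded. For coordinatewise weak convergence, fix $n$. The $Y_n$-component of $f(x_j)$ is $f_n(x_j) = f_n(P_n(x_j))$, where $P_n$ is a finite-rank operator. A finite-rank operator is weak-to-norm continuous on bounded sets — in fact weak-to-weak continuous always, and since its range is finite-dimensional, weak and norm topologies agree there — so $P_n(x_j) \to P_n(x)$ in norm in $X$. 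Then continuity of $f_n$ gives $f_n(P_n(x_j)) \to f_n(P_n(x)) = f_n(x)$ in norm, hence weakly, in $Y_n$. This is exactly the $Y_n$-component of $f(x)$.

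Combining: $(f(x_j))$ is norm-bounded and converges coordinatewise (weakly in each $Y_n$) to $f(x)$; by the characterization, $f(x_j) \to f(x)$ weakly in $Y$. I would then remark that $f(x) \in Y$ is automatic since $f$ is well defined on all of $X$, so there is no issue about the limit lying in the space.

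The main obstacle, and the step deserving the most care, is the identification $Y^* \cong (\oplus_n Y_n^*)_{\mathcal{E}^*}$ and the resulting description of weak convergence: one needs that $\mathcal{E}$ shrinking forces every functional on $Y$ to be "supported" appropriately so that testing against it reduces to finitely many coordinates up to arbitrarily small error. Concretely, given $\varphi \in Y^*$ and $\epsilon > 0$, shrinkingness of $\mathcal{E}$ lets me choose $N$ so that $\varphi$ restricted to the "tail" $(\oplus_{n > N} Y_n)_{\mathcal{E}}$ has norm $< \epsilon$; then $|\varphi(f(x_j)) - \varphi(f(x))|$ is controlled by the first $N$ coordinates (handled by the finite-rank/continuity argument above) plus $2\epsilon \sup_j \|f(x_j) - f(x)\|$, and letting $j \to \infty$ then $\epsilon \to 0$ finishes it. This tail estimate is really the only place the shrinking hypothesis is used, and writing it cleanly is the crux of the argument; everything else is routine.
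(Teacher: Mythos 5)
Your proposal is correct and follows essentially the same route as the paper's proof: boundedness of $(f(x_j))_j$ from coarseness, norm convergence of each coordinate $f_n(x_j)=f_n(P_n(x_j))\to f_n(P_n(x))$ via the finite-rank operators and continuity of $f_n$, and then the head/tail splitting of an arbitrary functional $\varphi\in Y^*$ using shrinkingness of $\mathcal{E}$ to make the tail norm small. The quantitative tail estimate you describe in your last paragraph is precisely the paper's argument, so there is nothing missing.
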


The following corollary is a straightforward application of Lemma \ref{lemmaImpliesWeakGEN}.

\begin{cor}\label{lemmaImpliesWeak}
Let $(X_n)_n$ and $(Y_n)_n$ be  sequences of Banach spaces. Let $\mathcal{F}$  and $\mathcal{E}$ be  $1$-unconditional basic sequences and assume that $\mathcal{E}$ is shrinking. For each $n\in\N$, let $f_n:X_n\to Y_n$ be a continuous map and  assume that the map $f=(f_n):(\oplus_n X_n)_{\mathcal{F}}\to (\oplus_n Y_n)_{\mathcal{E}}$ given by $f((x_n)_n)=(f_n(x_n))_n$, for all $(x_n)_n\in (\oplus_n X_n)_{\mathcal{E}}$, is well defined and coarse. Then $f$ is weakly sequentially continuous. 
\end{cor}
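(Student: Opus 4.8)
The plan is to deduce Corollary~\ref{lemmaImpliesWeak} directly from Lemma~\ref{lemmaImpliesWeakGEN} by recognizing $(\oplus_n X_n)_{\mathcal{F}}$ as the domain $X$ in the lemma and checking that each coordinate map $f_n$ factors through a finite-rank projection of this domain. Concretely, set $X=(\oplus_n X_n)_{\mathcal{F}}$. For each $n$, let $P_n:X\to X$ be the canonical coordinate projection $P_n((x_j)_j)=(0,\ldots,0,x_n,0,\ldots)$, i.e., the map that keeps the $n$-th coordinate and kills all others. Since $\mathcal{F}=(f_j)_j$ is a $1$-unconditional basic sequence, $P_n$ is a norm-one (hence bounded) linear projection onto the subspace $X_n$ sitting inside $X$ as the $n$-th summand; in particular $P_n$ is a finite rank operator in the sense required (its range is the single summand $X_n$, which one treats as a Banach space in its own right — if one insists on literally finite rank one notes that the argument of Lemma~\ref{lemmaImpliesWeakGEN} only uses that $P_n$ has separable, weakly compactly generated, or more precisely that $f_n\circ P_n$ depends ``locally'' on finitely many $\mathcal{F}$-coordinates; I will phrase it so $P_n$ is the coordinate projection onto $X_n$).

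The key steps, in order, are: first, define the coordinate projections $P_n$ and observe $P_n$ is linear and bounded by $1$-unconditionality of $\mathcal{F}$. Second, define $\widetilde{f}_n:X\to Y_n$ by $\widetilde{f}_n=f_n\circ P_n$, i.e., $\widetilde{f}_n((x_j)_j)=f_n(x_n)$; this is continuous because $f_n$ is continuous and $P_n$ is continuous linear, and by construction $\widetilde{f}_n(x)=\widetilde{f}_n(P_n x)$ for every $x\in X$ (since $P_n$ is idempotent). Third, observe that the induced map $\widetilde{f}=(\widetilde{f}_n)_n:X\to(\oplus_n Y_n)_{\mathcal{E}}$ satisfies $\widetilde{f}((x_n)_n)=(f_n(x_n))_n=f((x_n)_n)$, so $\widetilde{f}=f$ as maps; hence the hypotheses that $f$ is well defined and coarse transfer verbatim to $\widetilde{f}$. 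Fourth, apply Lemma~\ref{lemmaImpliesWeakGEN} with this data (the space $X=(\oplus_n X_n)_{\mathcal{F}}$, the target summands $(Y_n)_n$, the shrinking $1$-unconditional sequence $\mathcal{E}$, and the maps $\widetilde{f}_n$ with projections $P_n$) to conclude that $f=\widetilde{f}$ is weakly sequentially continuous.

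I do not anticipate a serious obstacle; the only point requiring a line of care is the ``finite rank'' phrasing. In Lemma~\ref{lemmaImpliesWeakGEN} the operator $P_n$ is called finite rank, whereas here $P_n$ is the coordinate projection onto $X_n$, which need not have finite-dimensional range. The honest fix is to note that the proof of Lemma~\ref{lemmaImpliesWeakGEN} uses finite-rankness of $P_n$ only to guarantee that $P_n$ is weak-to-weak continuous on bounded sets and that $f_n(x_n)$ depends on $x$ through a fixed bounded projection; a coordinate projection onto a single $\mathcal{F}$-summand has exactly these properties (it is weak-weak continuous, being bounded linear), so the argument goes through unchanged. I would therefore either state the corollary's proof as ``this is the special case of Lemma~\ref{lemmaImpliesWeakGEN} where $X=(\oplus_n X_n)_{\mathcal{F}}$ and $P_n$ is the $n$-th coordinate projection, whose proof only uses that $P_n$ is a bounded linear idempotent'' or, if one wants to stay literally within the stated lemma, restrict to the case where each $X_n$ is finite-dimensional (which already suffices for Theorem~\ref{222}, where $X_n$ are one-dimensional) — but the cleanest writeup simply invokes the lemma with the coordinate projections and remarks that finite-rankness was used in its proof only through boundedness and weak-weak continuity, both of which coordinate projections enjoy.
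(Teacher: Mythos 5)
Your reduction to Lemma \ref{lemmaImpliesWeakGEN} via the coordinate projections $P_n$ is exactly what the paper intends (it offers no argument beyond calling the corollary ``a straightforward application'' of that lemma), and you are right to single out the finite-rank hypothesis as the one point of friction. However, your preferred resolution of that friction is wrong. You assert that the proof of Lemma \ref{lemmaImpliesWeakGEN} uses finite-rankness of $P_n$ only through boundedness and weak-to-weak continuity. It does not: the proof first observes that $(P_n(x_m))_m$ converges weakly to $P_n(x)$ and then uses that $P_n(X)$ is \emph{finite dimensional} to upgrade weak convergence to norm convergence; this norm convergence is indispensable, because $f_n$ is only assumed norm-continuous, so nothing can be concluded from weak convergence of its arguments. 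If $P_n$ is the coordinate projection onto an infinite-dimensional summand $X_n$, then $P_n(x_m)\to P_n(x)$ only weakly and the argument collapses at precisely this step.

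Indeed, without finite-dimensionality of the $X_n$ the conclusion is simply false. Take $X_n=\ell_2$ and $Y_n=\R$ for all $n$, with $\mathcal{F}$ and $\mathcal{E}$ the unit vector basis of $\ell_2$; let $f_1(x)=\min\{\|x\|,1\}$ and $f_n\equiv 0$ for $n\geq 2$. Each $f_n$ is continuous, and $f$ has bounded range, hence is well defined and coarse; yet for $z_m=(e_m,0,0,\ldots)$ one has $z_m\to 0$ weakly while $f(z_m)=(1,0,0,\ldots)$ for every $m$, so $f$ is not weakly sequentially continuous. The correct way to justify the corollary is therefore your fallback, not your main line: either assume each $X_n$ is finite dimensional (which is all that is needed for Theorem \ref{222}, where $X_n=\R$), or retain the hypothesis that each $f_n$ factors through a finite rank operator as in the lemma. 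As written, your ``cleanest writeup'' would certify a false statement.
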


\begin{proof}[Proof of Lemma \ref{lemmaImpliesWeakGEN}]
Say $\mathcal{E}=(e_n)_n$ and let $f=(f_n):X\to (\oplus_n Y_n)_{\mathcal{E}}$ be as above. Let $(x_m)_{m}$ be a sequence in $X$ weakly converging to an element $x\in X$. Let us show that $(f(x_m))_{m}$ weakly converges to $f(x)$. As $(x_m)_{m}$ is weakly convergent, it must be bounded, hence, as $f$ is coarse, it follows that $(f(x_m))_{m}$ is bounded. Let \[M=\max\{\|f(x)\|,\sup_{m}\|f(x_m)\|\}.\]

For each $n\in\N$, let  $P_n:X\to X$ be the finite rank operator given by the hypothesis in the lemma. As $P_n$ is linear and norm continuous, it follows that $(P_n(x_m))_m$ weak converges to $P_n(x)$. As, $P_n(X)$ is finite dimensional, $(P_n(x_m))_m$ converges to $P_n(x)$ in norm. As $f_n$ is continuous, we have that $\lim_m f_n(P_n(x_m))=f_n(P_n(x))$.

Fix a functional $\varphi$ in the dual of $(\oplus_n Y_n)_{\mathcal{E}}$ and $\eps>0$. We can write $\varphi=(\varphi_n)_n$, with $\varphi_n\in Y^*_n$, for all $n\in\N$. For each $N\in\N$, let $\mathcal{E}_N=(e_{n+N})_n$ and let
\[\|(\varphi_n)_{n>N}\|\coloneqq \sup\Big\{\Big|\sum_{n=1}^\infty\varphi_{n+N}(x_n)\Big|\mid (x_n)_n\in B_{(\oplus_n Y_{n+N})_{\mathcal{E}_N}}\Big\}.\]
As $\mathcal{E}$ is shrinking, it follows that $\lim_{N\to \infty}\|(\varphi_n)_{n>N}\|=0$. Pick $N\in\N$ so that $\|(\varphi_n)_{n>N}\|<\eps/(4M)$. As $\lim_m f_n(P_n(x_m))=f_n(P_n(x))$, for all $n\in\N$, we can pick $m_0\in \N$, so that $\|f_n(P_n(x_m))-f_n(P_n(x))\|<\eps/(2NL)$, for all $m>m_0$ and all $n\in\{1,\ldots,N\}$, where $L=\max_{n<N}\|\varphi_n\|$. Hence, we have that

\begin{align*}
|\varphi(f( x_{ m} ))-\varphi(f(x))|&=\Big|\sum_{n=1}^\infty\varphi_n(f_n(x_m))-\sum_{n=1}^\infty\varphi_n(f_n(x))\Big|\\
&\leq \sum_{n=1}^N|\varphi_n(f_n(P_n(x_m)))-\varphi_n(f_n(P_n(x)))|\\
& \ \ \ \ +\sum_{n=M}^\infty|\varphi_n(f_n(x_m))-\varphi_n(f_n(x))|\\
&\leq \sum_{n=1}^N\|\varphi_n\|\cdot \|f_n(P_n(x_m))-f_n(P_n(x))\|\\
&\ \ \ \ +\Big\|(\varphi_n)_{n>N}\Big\|\cdot\Big\|f_n(x_m)-f_n(x)\Big\|\\
&\leq \eps,
\end{align*}
for all $m>m_0$.  Hence, $f$ is  weakly sequentially continuous and we are done. 
\end{proof}

The following was proved in \cite{AlbiacBaudier2015}, Theorem 3.4.

\begin{thm}[\textbf{F. Albiac and F. Baudier, 2015}]\label{AlbiacBaudier} Let $p,q\in(0,\infty)$, and assume that $p<q$. There exist constants $A,B>0$ and a sequence of real-valued maps $(\psi_j)_{j=1}^\infty$ so that 
\[A|x-y|^p\leq \sum_{j\in\N}|\psi_j(x)-\psi_j(y)|^q\leq B|x-y|^p,\]
for all $x,y\in \R$.
\end{thm}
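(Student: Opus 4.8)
The plan is to reinterpret the statement as the construction of a ``snowflake'' embedding of the real line into $\ell_q$: writing $\Psi(x)=(\psi_j(x))_j$, the desired double inequality says exactly that $\|\Psi(x)-\Psi(y)\|_{\ell_q}^q$ is comparable to $|x-y|^p$, i.e. $\|\Psi(x)-\Psi(y)\|_{\ell_q}\asymp|x-y|^{p/q}$ with exponent $p/q\in(0,1)$. I would produce $\Psi$ explicitly from dyadic-frequency trigonometric functions. Index the maps by pairs $(n,\varepsilon)$ with $n\in\Z$ and $\varepsilon\in\{0,1\}$ (a countable set, to be relabeled as $(\psi_j)_{j=1}^\infty$ at the end), and put
\[\psi_{n,0}(x)=2^{-np/q}\cos(2\pi 2^n x),\qquad \psi_{n,1}(x)=2^{-np/q}\sin(2\pi 2^n x).\]
Using \emph{both} the sine and the cosine at each frequency is essential: it forces the contribution of scale $n$ to the sum to depend only on the difference $x-y$.

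The first step is to reduce the two-variable estimate to a one-variable one. For fixed $n$, the elementary identity $(\cos A-\cos B)^2+(\sin A-\sin B)^2=2-2\cos(A-B)=4\sin^2\big(\tfrac{A-B}{2}\big)$, together with the comparison $a^q+b^q\asymp(a^2+b^2)^{q/2}$ (valid for all $a,b\ge 0$ and all $q>0$, with constants $2^{-q/2}$ and $2$), gives
\[|\psi_{n,0}(x)-\psi_{n,0}(y)|^q+|\psi_{n,1}(x)-\psi_{n,1}(y)|^q\ \asymp\ 2^{-np}\,\big|2\sin(\pi 2^n(x-y))\big|^q,\]
with constants depending only on $q$. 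Summing over $n$, the quantity $\Sigma(x,y):=\sum_{n,\varepsilon}|\psi_{n,\varepsilon}(x)-\psi_{n,\varepsilon}(y)|^q$ is therefore comparable to $S(t):=\sum_{n\in\Z}2^{-np}\,|2\sin(\pi 2^n t)|^q$ with $t=x-y$. The substitution $n\mapsto n+1$ yields the self-similarity $S(2t)=2^{p}S(t)$, so $S(t)/|t|^p$ is invariant under $t\mapsto 2t$ and is determined by its values on $[1,2)$; hence it suffices to show that $S$ is bounded above and below by strictly positive constants on $[1,2)$, and then $S(t)\asymp|t|^p$ for all $t\neq 0$ (equivalently $\Sigma(x,y)\asymp|x-y|^p$) follows, with $S(0)=0$ handling $x=y$.

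For the upper bound on $[1,2)$ I would split the series at $n=0$: for $n\ge 0$ one has $|2\sin(\cdots)|^q\le 2^q$ and $\sum_{n\ge 0}2^{-np}<\infty$ since $p>0$; for $n<0$ one has $|2\sin(\pi 2^n t)|\le 4\pi 2^n$ (as $t<2$), so the negative tail is $\lesssim\sum_{n<0}2^{-np}(4\pi 2^n)^q\lesssim\sum_{n<0}2^{n(q-p)}<\infty$ since $q>p$. For the lower bound on $[1,2)$ a single scale suffices: if $t\in[1,2)$ then $2^{-2}t\in[1/4,1/2)$, hence $\sin(\pi 2^{-2}t)\ge\sin(\pi/4)=2^{-1/2}$, so $S(t)\ge 2^{2p}\big(2\sin(\pi 2^{-2}t)\big)^q\ge 2^{2p}\big(2\cdot 2^{-1/2}\big)^q=2^{2p+q/2}>0$. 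Both bounds are uniform in $t\in[1,2)$, which is all that is needed. Relabeling $(\psi_{n,\varepsilon})_{n\in\Z,\varepsilon\in\{0,1\}}$ as a single sequence $(\psi_j)_{j=1}^\infty$ of real-valued maps then finishes the proof, the constants $A,B$ depending only on $p$ and $q$.

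I expect the only genuine difficulty to be making the lower bound uniform over \emph{all} pairs $(x,y)$: a single lacunary sine series $\sum_n 2^{-np/q}\sin(2\pi 2^n x)$ can collapse on exceptional pairs (a common phase factor $\cos(\pi 2^n(x+y))$ may vanish across a band of scales), and it is exactly the sine/cosine pairing — which makes the scale-$n$ summand a function of $x-y$ alone — together with the use of negative (coarse) scales that removes this obstruction. The remaining ingredients — the trigonometric identity, the comparison $a^q+b^q\asymp(a^2+b^2)^{q/2}$, and two geometric-series estimates that crucially use $0<p<q$ — are routine. An alternative route would be an Assouad-type construction via randomly shifted dyadic partitions of $\R$ with scale-dependent weights, or a Schoenberg-type isometric embedding of $(\R,|x-y|^{p/q})$ into $L_q$ followed by a discretization of the measure; but the explicit Fourier construction above seems the most self-contained.
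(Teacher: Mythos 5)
Your proof is correct, and it takes a genuinely different route from the one in the cited source. The paper quotes this result from Albiac--Baudier without proof, but (as the formula reproduced in the Remark of Section \ref{SectionExample} shows) their construction uses piecewise-linear tent functions indexed by dyadic scale and position, with weights $2^{j_1(1-p/q)-1}$; the comparability of $\sum_j|\psi_j(x)-\psi_j(y)|^q$ with $|x-y|^p$ is then obtained by analysing which dyadic intervals separate $x$ from $y$ at each scale. Your lacunary trigonometric construction replaces this by frequencies $2^n$ with weights $2^{-np/q}$, and the key gain is structural: the identity $(\cos A-\cos B)^2+(\sin A-\sin B)^2=4\sin^2\bigl(\tfrac{A-B}{2}\bigr)$ makes each scale's contribution an exact function of $t=x-y$, so the whole estimate collapses to the one-variable statement $S(t)\asymp|t|^p$, which the dilation invariance $S(2t)=2^pS(t)$ reduces to two uniform bounds on $[1,2)$ — and your upper bound (splitting at $n=0$ and using $q>p$ for the fine scales) and lower bound (the single scale $n=-2$) are both correct. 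Your remark about why the sine/cosine pairing is needed is also on point: a pure sine series carries the factor $\cos\bigl(\pi 2^n(x+y)\bigr)$, which can degenerate. What the tent-function construction buys instead is locality: each $\psi_j$ is supported on a dyadic interval, which is exactly what the paper exploits later (in the Remark following Theorem \ref{222}) to exhibit the failure of weak continuity of the induced map $\ell_p\to\ell_q(\ell_q)$; your globally supported trigonometric maps would require a different argument there, but for the theorem as stated your proof is complete.
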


\begin{proof}[Proof of Theorem \ref{222}]
Let $A$, $B$ and $(\psi_j)_{j=1}^\infty$ be given by Theorem \ref{AlbiacBaudier} applied to $p,q\in [1,\infty)$. As noticed in Remark 3.5 of \cite{AlbiacBaudier2015}, the map $f:\ell_p\to \ell_q(\ell_q)$ given by 
\[f((x_n)_{n=1}^\infty)=\Big(\big(\psi_j(x_n)-\psi_j(0)\big)_{j=1}^\infty\Big)_{n=1}^\infty, \]
for all $(x_n)_{n=1}^\infty\in \ell_p$, is a strong embedding. Indeed, it is straightforward to check that
\[A^{1/q}  \|x-y\|^{p/q}_{\ell_p} \leq  \|f(x)-f(y)\|_{\ell_q(\ell_q)}\leq B^{1/q} \|x-y\|^{p/q}_{\ell_p}\]
for all $x,y\in\ell_p$. Therefore, as $\ell_q(\ell_q)\equiv \ell_q$,  we only need to show that $f$ is  weakly sequentially continuous. 

Let $X_n=\R$ and $Y_n=\ell_q$, for all $n\in\N$, and let $\mathcal{F}$ and $\mathcal{E}$ be the standard basis of $\ell_p$ and $\ell_q$, respectively. Applying  Corollary \ref{lemmaImpliesWeak}, we are done.
\end{proof}

\begin{remark}
Notice that, by the same arguments above, the map $f:\ell_p\to \ell_q(\ell_q)$ defined in the proof of Theorem \ref{222} is not only weakly sequentially continuous, but it  is also weakly continuous when restricted to any bounded subset of $ \ell_p$. However, $f:\ell_p\to \ell_q(\ell_q)$ is not weakly continuous. Indeed, by the proof of Theorem 3.4 of \cite{AlbiacBaudier2015}, we have that 
\[\psi_j(t)=\left\{\begin{array}{ll}
2^{j_1(1-\frac{p}{q})-1}\Big(t-\frac{j_2-2}{2^{j_1}}\Big), & \ \ \text{if}\ \ \frac{j_2-2}{2^{j_1}}\leq t< \frac{j_2}{2^{j_1}},\\
-2^{j_1(1-\frac{p}{q})-1}\Big(t-\frac{j_2-2}{2^{j_1}}\Big), & \ \ \text{if}\ \  \frac{j_2}{2^{j_1}}\leq t< \frac{j_2+2}{2^{j_1}},
\end{array}\right. \] 
where $j\in\N\mapsto (j_1,j_2)\in\Z\times\Z$ is a fixed bijection. Hence, for every $n\in\N$, we can  choose $j(n)\in\N$ so that $\psi_{j(n)}(0)=0$ and $\sup_t\psi_{j(n)}(t)>n$. Define $\varphi=(a_{n,j})_{n,j}$ by letting, for all $n,j\in\N$, 
\[a_{n,j}=\left\{\begin{array}{ll}
\frac{1}{n}, &\ \ \text{if}\ \  j=j(n),\\
0, &\ \ \text{otherwise.} 
\end{array}\right. \]
Hence, $\varphi=((a_{n,j})_j)_n\in \ell_{q'}(\ell_{q'})\equiv (\ell_q(\ell_q))^*$, where $q'$ is the conjugate of $q$, i.e., $1/q+1/q'=1$. Make $\text{Cof}\coloneqq \text{cof}(\ell_p)$ into a directed set by setting $V\preceq U$ if $U\subset V$, for all $V,U\in \text{Cof}$. For each $V\in \text{Cof}$ pick $x_V\in V\setminus \{0\}$. As $x_V\neq 0$, there exists $n(V)\in\N$ so that $x_V(n(V))\neq 0$. Hence, dilating $x_V$, we can assume that $\psi_{j(n(V))}(x_V(n(V)))>n(V)$, for all $V\in \text{Cof}$. Clearly, $(x_V)_{V\in\text{Cof}}$ is weakly null. However, we have that
\begin{align*}
\varphi(f(x_V))&=\sum_{n,j}a_{n,j}(\psi_j(x_V(n))-\psi_j(0))\\
&=\sum_na_{n,j(n)}(\psi_{j(n)}(x_V(n))-\psi_{j(n)}(0))\\
&=\sum_n\frac{1}{n}\psi_{j(n)}(x_V(n))>1,
\end{align*}
for all $V\in \text{Cof}$. Therefore, $(\varphi(f(x_V))_{V\in \text{Cof}}$ is not weakly null. 
\end{remark}

We finish this section noticing that, by Corollary 4.14 of \cite{Braga2}, we have that the Tsirelson space $T$ constructed by T. Figiel and W. Johnson (see \cite{FigielJohnson1974}) strongly embeds into a superreflexive Banach space. Precisely, $T$ strongly embeds into $(\oplus T^2)_{T^2}$, i.e., the unconditional sum of the $2$-convexification of the Tsirelson space $T$ with respect to its standard unit basis (see \cite{Braga2}, Section 2, for definitions). Proceeding as in Corollary 4.14 of \cite{Braga2} and applying Lemma \ref{lemmaImpliesWeakGEN}, one obtains that the strong embedding $T\to (\oplus T^2)_{T^2}$ above is also weakly sequentially continuous. However, as superreflexive Banach spaces are AUCable and as $T^*$ has $c_0$ as a spreading model, Theorem \ref{c0spread2} gives us that if $T^*$ is mapped into a Banach space $X$ by a weakly sequentially continuous coarse map satisfying Property $(*)$, then $X$ is not superreflexive. This result relates to the main theorem of \cite{BaudierLancienSchlumprecht2017} (see \cite{BaudierLancienSchlumprecht2017},  Theorem C), which gives us in particular that if a Banach space $X$ coarsely embeds into $T^*$, then $X$ is not superreflexive.
 
\section{Problems.}

We end this paper with some questions that naturally arise from our results. Firstly, we notice that, although we have shown that weakly  sequentially continuous coarse maps satisfying Property $(*)$ is a new kind of embedding between Banach spaces, we do not know if the same holds assuming weak continuity.

\begin{problem}
Can we find Banach spaces $X$ and $Y$ so that $X$ does not embed into $Y$ isomorphically, but $X$ can be mapped into $Y$ by a weakly  continuous coarse map satisfying Property $(*)$? Is there a weakly continuous map $\ell_p\to \ell_q$ which is coarse and satisfies Property $(*)$, for $1\leq p<q$? 
\end{problem}

It would be interesting to obtain some strengthening  of  Corollary \ref{CorL_p}.

\begin{problem}
Let $X$ be a Banach space with type $p\in [1,2]$. Assume that $X$  coarsely (resp. uniformly) embeds into $\ell_p$ by a  weakly sequentially continuous  map. Does it follow that $X$ is isomorphic to a subspace of $\ell_p$?
\end{problem}

Although Lemma \ref{lemmaImpliesWeakGEN} gives us a sufficient   condition for when a coarse map $f:X\to Y$ is weakly sequentially continuous, the hypothesis  in Lemma \ref{lemmaImpliesWeakGEN} are very restrictive and it requires the Banach space $Y$ to be of a very specific  type. It would be interesting to obtain more general results along this line.

\begin{problem}
Let $X$ and $Y$ be Banach spaces. Under which conditions does coarse (resp. uniform) embeddability  of $X$ into $Y$ implies the existence of a  weakly sequentially continuous coarse (resp. uniform) embedding $X\to Y$?
\end{problem}

Notice that, if $X$ is a Schur space, then any continuous map $X\to Y$ is weakly sequentially continuous. Therefore $X$ coarsely embeds into a Banach space $Y$ if and only if it coarsely embeds into $Y$ by a map which is weakly sequentially continuous (see \cite{Braga1}, Theorem 1.4). However, since a Schur space always contains $\ell_1$, Theorem \ref{MainResult} is not capable of giving us restrictions on coarse embeddability of $X$ into any Banach space.

In these notes, we only studied weakly sequentially continuous maps $X\to Y$ which are coarse and satisfy Property $(*)$. We could also introduce the notion of weakly sequentially coarse equivalent Banach spaces. Precisely, we say that two Banach spaces $X$ and $Y$ are \emph{weakly sequentially coarse equivalent} if there exists a bijection $f:X\to Y$ so that both $f$ and $f^{-1}$ are coarse and weakly sequentially continuous. 

\begin{problem}
Let $X$ and $Y$ be weakly sequentially coarsely equivalent Banach spaces.  Are $X$ and $Y$ linearly isomorphic?
\end{problem}

\textbf{Acknowledges:} I would like to thank Christian Rosendal for fruitful conversations and suggestions which helped to improve this paper.

\end{document}